\documentclass[11pt,a4paper]{amsart}
\usepackage[T1]{fontenc}
\usepackage{lmodern,amsmath,amssymb,amsthm,mathtools}
\usepackage[margin=28mm,headheight=14pt,headsep=16pt]{geometry}
\usepackage{microtype,enumitem,needspace,booktabs,array}
\usepackage[numbers,sort&compress]{natbib}
\usepackage{xcolor}
\usepackage[pdftitle={The RIGP over fields avoiding a curve}]{hyperref}
\setlist[enumerate]{label=\textup{(\roman*)},leftmargin=2.1em,itemsep=3pt,topsep=5pt}
\newtheorem{theorem}{Theorem}[section]
\newtheorem{proposition}[theorem]{Proposition}
\newtheorem{fact}{Fact}
\newtheorem{lemma}[theorem]{Lemma}
\newtheorem{question}[theorem]{Question}
\newtheorem{corollary}[theorem]{Corollary}
\theoremstyle{definition}

\newtheorem{remark}[theorem]{Remark}
\newcommand{\Q}{\mathbb Q}

\DeclareMathOperator{\Gal}{Gal}

\DeclareMathOperator{\Spec}{Spec}
\DeclareMathOperator{\Conf}{Conf}

\numberwithin{equation}{section}
\allowdisplaybreaks[1]
\newcommand{\CXF}{C\mathrm{XF}}

\newcommand{\WXF}{W\mathrm{XF}}
\newcommand{\REG}{\operatorname{REG}}
\newcommand{\CHur}{\operatorname{CHur}}

\newcommand{\ec}{\prec_{\exists}}
\newcommand{\dash}{\dashrightarrow}
\title[The RIGP over fields avoiding a curve]
{The regular inverse Galois problem over fields avoiding a curve
}
\author[Jinhe Ye]{Jinhe Ye}
\email{jinheye@hku.hk}

\begin{document}
\begin{abstract}
Let $k_0$ be a field of characteristic $0$, and let $C/k_0$ be a smooth geometrically integral curve of positive genus with $C(k_0)=\varnothing$.
We prove that the regular inverse Galois problem is solvable over every field existentially closed in the class of $k_0$-fields avoiding $C$, including the curve excluding fields of Johnson and the author.
 To this end, we show that for every finite group $G$, there is a parameter variety over $\mathbb Q$ whose function field admits a regular realization of $G$ without adding points to $C$.
Finally, we establish $\mathrm{RIGP}$ over the variety excluding fields of Szachniewicz and the author, when the excluded variety is a locally closed subvariety of an abelian variety.
\end{abstract}
\maketitle
\tableofcontents
\section{Introduction}

The classical inverse Galois problem asks whether every finite group occurs as the Galois group of a finite Galois extension of $\Q$.
A geometric approach is to construct Galois covers of the projective line and then specialize the realization. For a field $K$ of characteristic $0$, a finite group $G$ is \emph{regularly realizable over $K$} if there is a finite Galois extension $E/K(t)$ with group $G$ such that $K$ is relatively algebraically closed in $E$. The \emph{regular inverse
Galois problem} ($\mathrm{RIGP}$) over $K$ asks whether every finite group is regularly realizable over $K$, and we say \emph{$\mathrm{RIGP}$ over $K$ is solvable} if the answer is yes. By Hilbert's irreducibility theorem, solvability of $\mathrm{RIGP}$ over $\Q$ gives a positive answer to the inverse Galois problem.

A major development came from patching methods, using which Harbater first established regular realizability over fields such as $\Q_p$ and $k((u))$  (see \cite{Harbater2003} for a survey). Subsequently, Pop \cite{pop-embedding} introduced the class of \emph{large fields} and proved that $\mathrm{RIGP}$ is solvable over large fields. A field $K$ is large if every smooth geometrically integral curve over $K$ with one $K$-rational point has infinitely many. Examples include algebraically closed fields, real closed fields, pseudo-algebraically closed (PAC) fields (in the sense of Ax \cite{Ax-fin}), and fields admitting a nontrivial henselian valuation. Large fields admit interesting geometric and arithmetic properties; see~\cite{Pop-little} for a survey on large fields. Pop has shown that the fraction fields of henselian local domains that are not fields are large~\cite{Pophenselian}. Very recently, with Johnson, Tran and Walsberg, we established that the elementary class (in the sense of logic) generated by all fraction fields of henselian local domains that are not fields is exactly the class of large fields~\cite{large->henselian}. This provides an explanation for the shared nice properties of large fields, which include otherwise seemingly different classes of fields.

Regular realizability is preserved under extension of constants, so $\mathrm{RIGP}$ is solvable over every field containing a large subfield. On the other hand, Koenigsmann constructed fields over which $\mathrm{RIGP}$ is solvable, and it contains no large subfield \cite{Koe}. Our main theorem gives another source of such fields, arising from existentially closed fields that avoid a fixed curve. Such fields could also be large depending on the curve and the choice of base field. For the largeness of the existentially closed fields avoiding $C$, see the discussion after Theorem~\ref{thm:main}.

Largeness has a characterization in terms of logic:
\[
K\text{ is large} \quad \textit{if and only if} \quad K\ec K((u)),
\]
where $K\ec K((u))$ denotes existential closedness in the language of rings.
Explicitly, it means that every finite system of polynomial equations and inequations over $K$ that is solvable in $K((u))$ is already solvable in $K$.
Largeness also has strong consequences under model-theoretic tameness assumptions: every infinite large stable field is separably closed \cite{JTWY2024}, and every large simple field is bounded \cite{with-anand}. 
A number of recent papers have further pursued the classification of model-theoretically tame fields and its connection to largeness~\cite{secondpaper, field-top-1, field-top-2, johnson2025largeness, johnson2026translating}.

Fix a field $k_0$ of characteristic $0$ and a smooth geometrically integral curve $C/k_0$ of positive genus with $C(k_0)=\varnothing$. The curve is quasi-projective and its genus is that of its smooth projective completion $\overline C$.
An extension $K/k_0$ \emph{avoids $C$} if $C(K)=\varnothing$. Note that a $k_0$-rational point on $\overline C\setminus C$ ensures that every such field is non-large, since largeness would imply that $\overline{C}(K)$ is infinite.
Such a field is \emph{existentially closed in the class of fields avoiding $C$} if
\[
K\ec F
\qquad\text{for every }F/K
\text{ with }C(F)=\varnothing.
\]
Such fields are canonical examples of fields that are nice model-theoretically, yet do not contain large subfields when $\overline{C}(k_0)\neq\varnothing$ for suitably chosen $k_0$ (e.g. $\mathbb{Q})$. We work in the language of rings with constants for $k_0$, and write $T_{C,k_0}$ for the theory of $k_0$-fields avoiding $C$.

\begin{theorem}\label{thm:main}
Let $k_0$ be a field of characteristic $0$, and let $C/k_0$ be a smooth geometrically integral curve of positive genus with $C(k_0)=\varnothing$.
Then $\mathrm{RIGP}$ is solvable over every field $K/k_0$ that is existentially closed in the class of fields avoiding $C$.
\end{theorem}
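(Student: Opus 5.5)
The plan has two stages. First I produce a single extension of $K$ that avoids $C$ and over which $G$ is regularly realizable. Then I transfer that realization back down to $K$ using existential closedness.

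Fix a finite group $G$. The abstract announces a parameter variety $V$ over $\Q$ whose function field carries a regular realization of $G$ without adding points to $C$. I will set this up as follows. Over $\Q$ there is a Hurwitz-type construction $\pi\colon \mathcal{X}\to V\times\mathbb P^1$: a family of $G$-Galois covers of $\mathbb P^1$ parametrized by a geometrically integral variety $V$. I take $V$ to be a component of a Hurwitz space with enough branch points that it carries a universal family. It should be rational, or at least unirational, so that $\Q(V)$ is a purely transcendental extension of $\Q$ or sits inside one.

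The generic fibre then gives a regular $G$-extension $E/\Q(V)(t)$. Base change gives a regular $G$-extension of $L(t)$, where $L=K(V)=\operatorname{Frac}(K\otimes_\Q \Q(V))$. Here $V$ is geometrically integral, so $L$ is a field and regularity is preserved under base change.

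The key point is that $L$ avoids $C$. If $L$ is purely transcendental over $K$, this is standard. A point of $C$ over $K(x_1,\dots,x_n)$ gives a rational map $\mathbb A^n_K\dash C$. Since $C$ has positive genus, and hence is not unirational, this map is constant. So it comes from a $K$-point of $C$, contradicting $C(K)=\varnothing$. More generally, the same argument works whenever $V_K$ is rationally connected, or more simply whenever $V$ admits a dominant map from a rational variety: there are no nonconstant maps from rationally connected varieties to curves of positive genus.

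Now I use existential closedness. The regular realization over $L(t)$ is witnessed by a finite amount of data. I will write it as a polynomial $f(t,y)$ with coefficients in $L$, together with the conditions that it is irreducible over $\overline{L}(t)$ and that its splitting field has group $G$ with explicitly given automorphisms. These coefficients are rational functions of parameters $v\in V(L)$. Alternatively, I can phrase everything directly: the existence of a point $v\in V$ such that the fibre cover $\mathcal X_v\to\mathbb P^1$ is a geometrically irreducible $G$-cover defined over the base field. This is an existential statement over $K$, and it is true in $L$ via the generic point. Since $L\supseteq K$ and $C(L)=\varnothing$, we get $K\ec L$, so such a $v\in V(K)$ exists.

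The delicate part is that geometric irreducibility of the specialized cover must be existentially expressible, or be automatic. I will arrange that it is automatic. Every $K$-point of the Hurwitz space $V$, a fine moduli space for $G$-covers with $G$ of trivial center, or else after using $G$-covers with marked fibre point, corresponds to a $G$-Galois cover of $\mathbb P^1_K$ that is geometrically connected. Its function field is then a regular $G$-extension of $K(t)$. To handle a general $G$ with nontrivial center, I first realize a centerless group $\tilde G$ surjecting onto $G$, for example $\tilde G = G\times S_n$ or a suitable wreath product. Regular realizations pass to quotients.

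The main obstacle I expect is the unirationality or rational connectedness of a suitable component $V$ defined over $\Q$. Hurwitz spaces are not rational in general. My first attempt would be to use instead the configuration-space description: take the cover $\mathcal H\to\Conf_r$. Over the function field of $\Conf_r$, which is purely transcendental after symmetrization, the $G$-cover becomes defined after a finite extension. But a finite extension may acquire points of $C$, which is exactly what the avoidance must control.

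The fallback is a different route. Use that $\mathcal X$ is defined over $\Q$ and that $K$ contains $k_0$, and realize $G$ over $\Q(V)$ for some $V$ with $V(\Q)$ dense. Any variety $V/\Q$ with a smooth $\Q$-point gives an extension $K(V)$ that embeds into $K((u_1))\cdots((u_n))$. So I only need $C(K((u)))=\varnothing$, which holds since $C$ has a smooth model and $C(K)=\varnothing$ (Hensel's lemma plus the reduction map on points of a curve of positive genus). Thus I will choose $V$ to be a component of a Hurwitz space admitting a smooth $\Q$-point. Such components exist by Pop's theorem applied to the large field $\Q((u))$, or more precisely to fields like $\Q^{\mathrm{tp}}$ or $\Q_p$. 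Making this work over $\Q$ itself is the delicate step, and I would handle it using the iterated Laurent series field, which is large, together with a specialization argument to a smooth point of $V$.
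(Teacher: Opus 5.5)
\textbf{Gap: showing that $K(V_K)$ avoids $C$.} You flag this step yourself, and it is the heart of the theorem.

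Your main route needs $V$ to be unirational or rationally connected. Nothing of that kind is available for the Hurwitz spaces you need, so that route does not yet give a proof.

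The fallback is wrong. The claim $C(K((u)))=\varnothing$ would require a reduction map $C(K((u)))\to C(K)$. But $C$ is only quasi-projective, so the valuative criterion lands only in $\overline C(K)$. Suppose $P\in\overline C(K)$; then necessarily $P\notin C$. An example is $k_0=\mathbb Q$ and $C=\overline C\setminus\overline C(\mathbb Q)$ with $\overline C$ an elliptic curve of rank $0$. Completing at $P$ gives $K(C)\hookrightarrow\operatorname{Frac}\widehat{\mathcal O}_{\overline C,P}\cong K((u))$, so $C(K((u)))\neq\varnothing$.

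More conceptually, if $K((u))$ avoided $C$ you would get $K\prec_{\exists}K((u))$, i.e.\ $K$ large. A field avoiding $C$ with $\overline C(K)\neq\varnothing$ is never large. When $\overline C(K)=\varnothing$, your argument is correct, but $K$ is then large and Pop's theorem already finishes. So any argument that puts $K(V_K)$ inside a large extension of $K$ only recovers the case that was already known.

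There is also a problem with the choice of $V$. A $\mathbb Q$-point on a Hurwitz space carrying a universal family of geometrically connected covers is itself a regular realization over $\mathbb Q$, which is an open problem. Pop's theorem only produces such points over large fields like $\mathbb Q((u))$.

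\textbf{Missing idea.} Rational connectedness is much more than you need. It suffices that a smooth projective compactification $X$ of $V$ satisfies $H^0(X,\Omega^1_{X/\mathbb Q})=0$. A nonconstant rational map $V_K\dashrightarrow C_K$ extends to a morphism $X_K\to\overline C_K$, because curves of positive genus contain no rational curves. In characteristic $0$ this morphism pulls a nonzero regular $1$-form on $\overline C_K$ back to a nonzero one on $X_K$, a contradiction. A constant map would give a point of $C(K)$.

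The paper proves this vanishing for $V=P_N$. This is the marked Hurwitz space of a Fried--V\"olklein auxiliary group $\Gamma\twoheadrightarrow G$, with $N$ branch points in every nonidentity class. The input is the Landesman--Levy homological stability theorem: the colored branch map gives $H^1(B_N,\mathbb Q)\cong H^1(P_{N,\mathbb C},\mathbb Q)$, where $B_N$ is open in affine space. Hence the weight-one part vanishes, so $H^1(X_{\mathbb C},\mathbb Q)=0$, and Hodge theory then kills the $1$-forms.

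\textbf{What works.} Once avoidance is established, the rest of your plan is sound. Transferring the existential statement ``$V$ has a point'' from $K(V_K)$ to $K$ and pulling back the universal family is a valid alternative to the paper's use of Koenigsmann's existential sentences.

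\textbf{Smaller slips.} First, $G\times S_n$ has center $Z(G)$, so it is not a centerless cover of $G$; the paper uses the Fried--V\"olklein group instead. Second, the geometric integrality of $V$ over $\mathbb Q$ is not automatic. The paper obtains it from Conway--Parker transitivity applied to the rational multiset consisting of $N$ copies of each nonidentity class.
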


Every field avoiding $C$ has such an existentially closed extension \cite[Fact~1.4]{cxf-paper}. One constructs it by successively adjoining witnesses to existential formulas whenever this is possible in an extension still avoiding $C$. 

When $g(C)\geq 2$, Johnson and the author proved that $T_{C,k_0}$ has a model companion $\CXF_{k_0}$, i.e. models of $\CXF_{k_0}$ are precisely these existentially closed fields avoiding $C$ \cite[Theorem~2.1]{cxf-paper}. We obtain the following consequence. Models of $\CXF_{k_0}$ may be large or non-large: a model $K$ is large precisely when $\overline C(K)=\varnothing$ \cite[Theorem~4.1]{cxf-paper}.

\begin{corollary}\label{cor:cxf}
Suppose $g(C)\geq 2$. Then $\mathrm{RIGP}$ is solvable over every model of $\CXF_{k_0}$.
\end{corollary}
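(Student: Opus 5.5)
Corollary~\ref{cor:cxf} follows directly from Theorem~\ref{thm:main} once we identify the models of $\CXF_{k_0}$ with the fields to which that theorem applies. The plan is simply to check the hypotheses and make this identification.

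First, the hypotheses. Since $g(C)\geq 2$, the curve $C$ has positive genus. It is smooth and geometrically integral over $k_0$, and $C(k_0)=\varnothing$ by the standing assumptions. So $C$ satisfies everything Theorem~\ref{thm:main} requires.

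Second, the identification. By \cite[Theorem~2.1]{cxf-paper}, when $g(C)\geq 2$ the theory $T_{C,k_0}$ has a model companion $\CXF_{k_0}$. By the general theory of model companions, the models of $\CXF_{k_0}$ are exactly the existentially closed models of $T_{C,k_0}$. Concretely, let $K\models \CXF_{k_0}$. Then $K$ is a $k_0$-field with $C(K)=\varnothing$. Moreover, for every extension $F/K$ with $C(F)=\varnothing$, the field $F$ is a model of $T_{C,k_0}$. It embeds into some model $F'$ of $\CXF_{k_0}$, since every model of a theory embeds into a model of its model companion. Model completeness of $\CXF_{k_0}$ then gives $K\preceq F'$, hence $K\ec F'$, and therefore $K\ec F$. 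Thus $K$ is existentially closed in the class of fields avoiding $C$.

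Finally, Theorem~\ref{thm:main} applies to $K$ and shows that $\mathrm{RIGP}$ is solvable over $K$. There is no real obstacle here: all the substantive work sits in Theorem~\ref{thm:main}, and the only step that needs care is the standard passage from model completeness to existential closedness in arbitrary extensions avoiding $C$, carried out above.
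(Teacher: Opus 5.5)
Your proposal is correct and follows the same route as the paper: the paper deduces the corollary directly from Theorem~\ref{thm:main}, using \cite[Theorem~2.1]{cxf-paper} to identify the models of $\CXF_{k_0}$ with the existentially closed fields avoiding $C$. Your step showing $K\ec F$ (embed $F$ into a model $F'$ of $\CXF_{k_0}$ and use model completeness) correctly spells out the standard model-theoretic fact that the paper takes for granted.
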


We will omit the $k_0$ in the notation when it is clear. Models of $\CXF$ are the prototypical examples of non-large model-complete fields as explained above. This leads to the most general form of Question~\ref{mcfields}, asking if $\mathrm{RIGP}$ is solvable over any model-complete field.

The proof of Theorem~\ref{thm:main} proceeds as follows:
Koenigsmann proved that regular realizability of a fixed finite group is witnessed by a family of existential sentences \cite[Theorem~3.1]{Koe}.
Thus a regular realization over $F$ descends to $K$ whenever $K\ec F$. It suffices to construct an extension of $K$ that admits the desired realization and still avoids $C$.

The main geometric ingredient is Proposition~\ref{prop:parameter}.
For every finite group $G$, we construct a smooth geometrically integral variety $V/\Q$ with a smooth projective compactification $X/\Q$ such that $G\text{ is regularly realizable over }\Q(V),\,
H^0(X,\Omega^1_{X/\Q})=0.$
Fried-V\"olklein's auxiliary group construction and arithmetic
Hurwitz spaces give $V$ \cite{FV1991}. Landesman-Levy's homological stability theorem gives the vanishing of $1$-forms \cite{LL2025}.

Set $F=K(V_K)$. Base change gives a regular realization of $G$ over $F$. The vanishing of $1$-forms rules out nonconstant rational maps from $V_K$ to $C_K$: pullback of a nonzero differential on $\overline C_K$ would provide a contradiction. Since $C(K)=\varnothing$, it follows that $C(F)=\varnothing$. Existential closedness descends the realization to $K$.

Finally, Section~\ref{sec:VXF} treats a class of variety excluding fields from \cite{vxf-paper}.
The same descent argument works, but one must again prevent the parameter variety from mapping nonconstantly to the excluded variety. We do this for subvarieties of abelian varieties.

\textbf{Acknowledgements.}
We are grateful to Jochen Koenigsmann and Benedikt Stock for helpful discussions.

\section{Preliminaries for the reduction}

We gather the results used in the reduction.
Throughout the paper, $k_0$ has characteristic $0$ and $C/k_0$ is a smooth geometrically integral curve of positive genus with $C(k_0)=\varnothing$. Its smooth projective completion is denoted by $\overline C$.
As in the introduction, $T_{C,k_0}$ is the theory of $k_0$-fields avoiding $C$, and a model $K$ is existentially closed if $K\ec F$ for every extension $F/K$ avoiding $C$, i.e. $F\models T_{C,k_0}$.

\begin{lemma}\label{lem:constants}
Let $L\subseteq M$ be fields of characteristic $0$. If $E/L(t)$ is a regular
Galois realization of a finite group $G$ over $L$, then $E_M=E\otimes_{L(t)}M(t)$ is a regular Galois realization of $G$ over $M$.
\end{lemma}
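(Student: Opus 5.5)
The plan is to reduce everything to the basic fact that a regular extension stays a field, with the same Galois group, after any base change of constants. First, $E/L(t)$ regular with $L$ algebraically closed in $E$ means, in characteristic $0$ (where separability is automatic), that $E/L$ is a regular extension of fields. This is equivalent to $E\otimes_L \overline{L}$ being a domain, and then $E\otimes_L M'$ is a domain for every field extension $M'/L$. Moreover, its fraction field is regular over $M'$.

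Concretely, I would take $E\otimes_L M$, a domain, and form its fraction field $E'$, the compositum $EM$ in a common overfield with $E$ and $M$ linearly disjoint over $L$. It contains $M(t)$. Then $E\otimes_{L(t)} M(t)$ is a localization of $E\otimes_L M$, because $E\otimes_{L(t)}(L(t)\otimes_L M)=E\otimes_L M$ and $M(t)$ is a localization of $L(t)\otimes_L M=L[t]_{(L[t]\setminus 0)}\otimes_L M$. Being finite-dimensional over $M(t)$ and a domain, it is a field. Hence $E_M$ equals $E'=EM(t)$, and $[E_M:M(t)]=[E:L(t)]=|G|$.

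Next I would check the Galois property. Write $E=L(t)[x]/(f)$ with $f$ the minimal polynomial of a primitive element. Since $E_M=M(t)[x]/(f)$ is a field, $f$ stays irreducible over $M(t)$. The polynomial $f$ splits in $E$, hence in $E_M$, so $E_M/M(t)$ is the splitting field of a separable irreducible polynomial, and therefore Galois. The group $G$ acts on $E\otimes_{L(t)}M(t)$ through the first factor, fixing $M(t)$, and this action is faithful. Comparing orders, this gives $\Gal(E_M/M(t))\cong G$.

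Finally I would show regularity, which is the only nonformal point: $M$ must be algebraically closed in $E_M$. Since $E$ is regular over $L$ and $E_M$ is the fraction field of $E\otimes_L M$, the standard result (e.g. Fried–Jarden, Lemma 2.6.7) says that $E M$ is regular over $M$ whenever $E/L$ is regular and $E$, $M$ are linearly disjoint over $L$; the linear disjointness is exactly the domain property shown above. To see the domain property itself, I would reduce to finitely generated subextensions. Then $E\otimes_L M$ embeds into $E\otimes_L \overline{L}\otimes_{\overline L}\overline{M}$, a tensor product of a domain and a field over an algebraically closed field, which is again a domain. I expect citing and verifying this regularity statement to be the main step; everything else is bookkeeping of degrees.
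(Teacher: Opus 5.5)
Your proposal is correct and follows essentially the same route as the paper: regularity of $E/L$ makes $E\otimes_L M$ a domain, $E_M$ is a finite-dimensional localization of it and hence a field of degree $|G|$ over $M(t)$, $G$ acts faithfully through the first factor, and regularity over $M$ comes from $E\otimes_L\overline M$ being a domain. The differences are only in presentation: you prove normality via a primitive element and a splitting field rather than by counting the $|G|$ distinct automorphisms $\sigma\otimes 1$, and you cite the Fried--Jarden lemma where the paper simply notes that $E_M\otimes_M\overline M$ is a localization of the domain $E\otimes_L\overline M$.
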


\begin{proof}
Since $E/L$ is regular, $E\otimes_L M$ is a domain. $E_M$ is a localization of this domain and is finite-dimensional over $M(t)$, hence is a field. Base change of the finite-dimensional $L(t)$-vector space $E$ gives $\dim_{M(t)}E_M=[E:L(t)]=|G|$. The automorphisms $\sigma\otimes 1$, for $\sigma\in G$, are distinct $M(t)$-automorphisms of $E_M$. There are exactly $[E_M:M(t)]=[E:L(t)]=|G|$ of them, so this is a Galois extension with group $G$.

Let $\overline M$ be an algebraic closure of $M$. The tensor product $E_M\otimes_M\overline M$ is a localization of $E\otimes_L\overline M$, which is a domain by regularity of $E/L$. Consequently $E_M/M$ is regular.
\end{proof}

\begin{proposition}\label{prop:witness}
For every finite group $G$ there are existential sentences $\REG(G,n)$, $n\in\mathbb N$, in the language of rings such that, for every field $F$,
\[
 G\text{ is regularly realizable over }F
 \quad\Longleftrightarrow\quad
 F\models\REG(G,n)\text{ for some }n.
\]
In particular, if $K\ec F$ and $G$ is regularly realizable over $F$, then $G$ is regularly realizable over $K$. Here $n$ parameterizes the degree of the polynomials defining the realization and its $G$-action.
\end{proposition}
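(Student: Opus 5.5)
The idea is to encode a regular realization by finitely many coefficients subject to polynomial equations and inequations, with $n$ bounding all the relevant degrees. A regular realization $E/F(t)$ of $G$ has a primitive element $y$, integral over $F[t]$ after rescaling, with minimal polynomial $f(t,Y)\in F[t][Y]$ of degree $|G|$ in $Y$. Writing $\rho_\sigma(t,Y)\in F(t)[Y]$ for the polynomial expressing $\sigma(y)$ in $y$, we may clear denominators by a common $d(t)$. So we get the following finite data, all of $t$-degree at most $n$: the coefficients of $f$, of $d$, and of the numerators of $\rho_\sigma$ for $\sigma\in G$. This is a finite tuple of elements of $F$.

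$\REG(G,n)$ will be the sentence $\exists\bar a\,\varphi_n(\bar a)$, where $\varphi_n$ is quantifier-free and states the following.
\begin{enumerate}
\item $d\neq 0$ and $f$ is monic in $Y$ of degree $|G|$.
\item $f(t,\rho_\sigma(t,Y))\equiv 0 \bmod f$ for each $\sigma$, and $\rho_\sigma(\rho_\tau)\equiv\rho_{\sigma\tau}\bmod f$; these are polynomial identities in the coefficients after multiplying by powers of $d$.
\item The $\rho_\sigma$ are pairwise distinct mod $f$.
\item $f$ is absolutely irreducible as a polynomial in $t,Y$.
\end{enumerate}
Conditions (i)–(iii) are clearly polynomial equations and inequations in the coefficients. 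Given (iv), $E=F(t)[Y]/(f)$ is a field. It has $|G|$ distinct automorphisms forming a copy of $G$, hence is Galois with group $G$. Absolute irreducibility gives that $F$ is algebraically closed in $E$, i.e. regularity. Conversely, every regular realization yields such data for large $n$, since regularity means exactly that $f$ stays irreducible over $\overline F(t)$, i.e. absolutely irreducible.

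The main obstacle is (iv): absolute irreducibility is not obviously existential, since a naive formulation involves nonexistence of factorizations over extensions. I would use the classical fact (Noether forms, or effective Bertini/Ruppert-type results) that for bounded degrees, the absolutely \emph{reducible} polynomials form a Zariski closed set. This set is defined over $\Q$ by finitely many polynomials $N_j$ in the coefficients. Then (iv) becomes $\bigvee_j N_j(\bar a)\neq 0$, which is quantifier-free. Note that in characteristic $0$ the Noether forms have integer coefficients, so the sentence lives in the language of rings.

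The ``in particular'' clause follows because existential sentences true in $F$ with parameters from $K$ (here none at all) transfer to $K$ when $K\ec F$. Hence $F\models\REG(G,n)$ implies $K\models\REG(G,n)$, and the equivalence gives a regular realization over $K$.
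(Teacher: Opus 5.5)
Your argument is correct. The paper gives no argument of its own here: it simply invokes Koenigsmann's Theorem~3.1 \cite{Koe}. What you have written is a self-contained proof of that cited result, along the standard lines. You take a primitive element with monic minimal polynomial $f$, write the automorphisms as rational functions in $y$, and express regularity as absolute irreducibility of $f$, which is a quantifier-free condition on the coefficients. What your version adds is an explicit description of $\REG(G,n)$ and of what $n$ bounds.

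Two points need tidying.

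First, Noether's forms concern forms of a fixed degree. For arbitrary polynomials of total degree at most $D$, the absolutely reducible ones form only a constructible set, not a closed one. For example, $(\epsilon t+1)(Y+t)$ is reducible for $\epsilon\neq 0$ but specializes to the irreducible $Y+t$. In your normalization, with $f$ monic in $Y$ of fixed $Y$-degree, the claim is actually true: monic factors have integral coefficients and so survive specialization. The simplest justification, though, is different: ``reducible over $\overline F$'' is an existential condition in $\overline F$. By quantifier elimination for algebraically closed fields, it is equivalent to a quantifier-free formula with integer coefficients, and such a formula can be evaluated in $F$.

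Second, since $\rho_\tau$ has coefficients in $F(t)$, one has $\sigma\tau(y)=\rho_\tau(\rho_\sigma(y))$. So your condition $\rho_\sigma(\rho_\tau)\equiv\rho_{\sigma\tau}$ makes $\sigma\mapsto(y\mapsto\rho_\sigma(y))$ an anti-homomorphism. Reindexing by $\sigma^{-1}$ fixes this and does not affect the conclusion.
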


\begin{proof}
This follows from Koenigsmann's theorem \cite[Theorem~3.1]{Koe}.
\end{proof}

\begin{lemma}\label{lem:descent}
Let $K$ be an existentially closed model of $T_{C,k_0}$, and let $F/K$ be an extension with $C(F)=\varnothing$.
Every finite group regularly realizable over $F$ is regularly realizable over $K$.
\end{lemma}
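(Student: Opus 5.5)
The plan is to combine the definition of existential closedness with Proposition~\ref{prop:witness}. Since $K$ is an existentially closed model of $T_{C,k_0}$ and $F/K$ is an extension with $C(F)=\varnothing$, $F$ is itself a model of $T_{C,k_0}$ extending $K$. By the definition recalled at the start of this section, this gives $K\ec F$.

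The steps are as follows. First, fix a finite group $G$ that is regularly realizable over $F$. Proposition~\ref{prop:witness} then yields some $n$ with $F\models\REG(G,n)$, where $\REG(G,n)$ is an existential sentence in the language of rings. Its only constants are $0$ and $1$, so it is also a sentence in the language with constants for $k_0$.

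Next, write $\REG(G,n)$ as $\exists \bar x\,\varphi(\bar x)$ with $\varphi$ quantifier-free. Choose a witness $\bar a\in F$ of $\varphi$. Existential closedness $K\ec F$ means every existential formula with parameters from $K$ that holds in $F$ holds in $K$. Applying this to the parameter-free formula $\exists\bar x\,\varphi(\bar x)$ gives $K\models\REG(G,n)$.

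Finally, applying the reverse direction of Proposition~\ref{prop:witness} to the field $K$ shows that $G$ is regularly realizable over $K$. This is exactly the ``in particular'' clause of Proposition~\ref{prop:witness}, applied once we know $K\ec F$.

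The only point needing care is to confirm that $F$ counts as an admissible extension in the definition of existential closedness. This holds because $F$ is a field containing $K\supseteq k_0$, so it is a $k_0$-field, and $C(F)=\varnothing$ by hypothesis. There is no genuine obstacle here: all the difficulty has been placed in Koenigsmann's theorem, packaged as Proposition~\ref{prop:witness}, and in later constructing a suitable $F$.
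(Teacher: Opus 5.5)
Your proposal is correct and follows the paper's own proof: existential closedness together with $C(F)=\varnothing$ gives $K\ec F$, and Proposition~\ref{prop:witness} then transfers $\REG(G,n)$ from $F$ to $K$. You merely spell out details that the paper leaves implicit, namely that $F$ is an admissible $k_0$-field avoiding $C$ and that a parameter-free existential sentence transfers down.
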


\begin{proof}
By existential closedness and $C(F)=\varnothing$, we have $K\ec F$.
Apply Proposition~\ref{prop:witness}.
\end{proof}

\section{Criterion for avoiding the given curve}

\begin{lemma}\label{lem:no-maps}
Let $k$ be a field of characteristic $0$ and let $V/k$ be a smooth geometrically integral variety. Suppose $V$ has a smooth projective compactification $X$ with $H^0(X,\Omega^1_{X/k})=0$. If $D/k$ is a smooth projective geometrically integral curve of positive genus, then there is no dominant rational map $V\dash D$.
\end{lemma}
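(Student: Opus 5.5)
We argue by contradiction: assume there is a dominant rational map $f\colon V\dash D$ and produce a nonzero global $1$-form on $X$. Since $V$ is dense open in $X$, $f$ is also a dominant rational map $X\dash D$. The plan is to pull back a nonzero regular differential from $D$. This works in three steps.

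\textbf{Step 1: the pullback is a rational $1$-form.} Because $D$ has positive genus, $H^0(D,\Omega^1_{D/k})$ has dimension $g(D)\geq 1$, so we may choose a nonzero regular $1$-form $\omega$ on $D$. Since $X$ is smooth, hence normal, the rational map $f$ is defined on an open $U\subseteq X$ whose complement has codimension $\geq 2$. On $U$ the pullback $f^*\omega$ is a regular section of $\Omega^1_{U/k}$.

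\textbf{Step 2: the pullback extends to all of $X$.} Because $X$ is smooth, $\Omega^1_{X/k}$ is locally free. A section of a locally free sheaf on a normal variety that is defined off a closed subset of codimension $\geq 2$ extends uniquely, by Hartogs/algebraic Hartogs ($S_2$). Hence $f^*\omega$ extends to an element of $H^0(X,\Omega^1_{X/k})$.

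\textbf{Step 3: the pullback is nonzero.} Since $f$ is dominant and $\operatorname{char}k=0$, the extension of function fields $k(D)\hookrightarrow k(X)$ is separable. Any finitely generated extension in characteristic $0$ is separably generated, so $\Omega_{k(D)/k}\otimes k(X)\to\Omega_{k(X)/k}$ is injective. As $\omega\neq0$ at the generic point of $D$, it follows that $f^*\omega\neq 0$. This gives a nonzero element of $H^0(X,\Omega^1_{X/k})=0$, a contradiction.

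The only delicate point is Step 2, the extension across the indeterminacy locus. Resolving indeterminacy is not necessary for it: normality gives that the indeterminacy locus has codimension $\geq2$, and reflexivity (indeed local freeness) of $\Omega^1_X$ gives the extension. As an alternative one could blow up to a smooth $\tilde X\to X$ resolving $f$ and use the birational invariance of $h^0(\Omega^1)$ for smooth projective varieties. Geometric integrality of $V$ and $D$ is used only to ensure that the function fields make sense and that $g(D)=h^0(\Omega^1_D)$ over $k$.
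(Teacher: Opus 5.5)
Your proof is correct, but it deals with the indeterminacy of $X\dashrightarrow D$ differently from the paper.

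\textbf{The paper's route.} It first upgrades the rational map to a morphism. A positive-genus curve contains no rational curves geometrically, so $D$ is geometrically pure. Then \cite[Proposition~5.7]{vxf-paper} extends $X\dashrightarrow D$ to a morphism $f\colon X\to D$. After that, $f^*$ on $H^0(\Omega^1)$ is directly defined, and its injectivity comes from the same generic separability argument you give in Step~3.

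\textbf{Your route.} You never extend the map; you extend the form instead. The map is defined off a closed set of codimension $\geq 2$, and $f^*\omega$ extends across that set because $\Omega^1_{X/k}$ is locally free on the normal variety $X$. The codimension claim uses normality of $X$ and also properness of $D$, which is what lets the valuative criterion apply at codimension-one points. You should state the properness explicitly, though it holds here since $D$ is projective.

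\textbf{Comparison.} Your argument is more elementary and self-contained. It uses only normality, local freeness (or reflexivity) of $\Omega^1_X$, and properness of the target. It would therefore work verbatim for any proper target carrying a $1$-form whose pullback is generically nonzero. The paper's route imports a heavier extension theorem that depends on $D$ having no rational curves. In exchange it gives the stronger conclusion that the map is everywhere defined, which this lemma does not actually need.
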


\begin{proof}
Suppose such a map $g:V\dash D$ exists. It induces a rational map $X\dash D$. A smooth projective curve of positive genus contains no rational curves over an algebraic closure, so $D$ is geometrically pure; see \cite[Definition~3.1 and Lemmas~3.2, 3.5]{algebraic_hyperbolicity}.
By \cite[Proposition~5.7]{vxf-paper}, we have a morphism $f:X\to D$ extending $g$. In characteristic $0$, pullback along $f$ is injective on regular $1$-forms. Indeed, the inclusion $k(D)\subseteq k(X)$ is separably generated, so the induced map on differentials is injective at the generic point. Hence
\[
 0\ne H^0(D,\Omega^1_{D/k})
 \hookrightarrow H^0(X,\Omega^1_{X/k})=0.
\]
This is a contradiction.
\end{proof}

\begin{corollary}\label{cor:avoidance}
Assume that $g(C)\geq1$. Let $K/k_0$ avoid $C$, and let $V/\Q$ be smooth and geometrically integral.
Suppose a smooth projective compactification $X/\Q$ of $V$ satisfies $H^0(X,\Omega^1_{X/\Q})=0$. Then $C\bigl(K(V_K)\bigr)=\varnothing.$
\end{corollary}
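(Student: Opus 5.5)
We argue by contradiction. Suppose $P\in C\bigl(K(V_K)\bigr)$. Since $K(V_K)$ is the function field of the smooth geometrically integral $K$-variety $V_K$, such a point is the same as a rational map $\phi\colon V_K\dash C_K$ over $K$. Composing with the open immersion $C_K\subseteq\overline C_K$ gives a rational map $V_K\dash\overline C_K$. The plan is to split into two cases according to whether this map is constant.

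\emph{Nonconstant case.} The image of $\phi$ is an irreducible closed subset of the curve $\overline C_K$. If it is not a point, then it is all of $\overline C_K$, so $V_K\dash\overline C_K$ is dominant. Now apply Lemma~\ref{lem:no-maps} with $k=K$, $V_K$ in place of $V$, and $D=\overline C_K$. The hypotheses check out as follows.
\begin{enumerate}
\item $\overline C_K$ is smooth, projective and geometrically integral of positive genus, since genus is invariant under base field extension.
\item $X_K$ is a smooth projective compactification of $V_K$.
\item $H^0(X_K,\Omega^1_{X_K/K})=H^0(X,\Omega^1_{X/\Q})\otimes_\Q K=0$ by flat base change for coherent cohomology, because $\Omega^1$ commutes with base change.
\end{enumerate}
The lemma then forbids a dominant rational map, which is a contradiction.

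\emph{Constant case.} Here $\phi$ factors through a closed point $c$ of $C_K$. The point $P$ therefore corresponds to a $K$-embedding $\kappa(c)\hookrightarrow K(V_K)$. Since $V_K$ is geometrically integral, $K$ is algebraically closed in $K(V_K)$. As $\kappa(c)$ is finite over $K$, this forces $\kappa(c)=K$. So $c\in C(K)=\varnothing$, which is again a contradiction.

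The only step needing care is the base change of the hypotheses, in particular the vanishing of $1$-forms on $X_K$ and the smoothness and projectivity of $X_K$ over $K$. Both are standard in characteristic $0$, so I do not expect a real obstacle there. A second point to handle cleanly is the identification of $K(V_K)$-points of $C$ with rational maps $V_K\dash C_K$, which uses that $C$ is of finite type.
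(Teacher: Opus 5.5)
Your proof is correct and is essentially the paper's argument: you base-change the vanishing $H^0(X,\Omega^1_{X/\Q})=0$ to $K$, rule out a nonconstant rational map $V_K\dashrightarrow \overline C_K$ by Lemma~\ref{lem:no-maps}, and rule out a constant one because $K$ is algebraically closed in $K(V_K)$. Your extra details (dominance because the closure of the image is the whole curve, and the residue-field argument in the constant case) make explicit what the paper leaves implicit.
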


\begin{proof}
 Taking global sections of $\Omega^1$ commutes with extension of the base field, so we have $H^0(X_K,\Omega^1_{X_K/K})=0$.

A point in $C(K(V_K))$ is a rational $K$-map $V_K\dash C_K$. If it is nonconstant, composing with $C_K\hookrightarrow\overline C_K$ contradicts Lemma~\ref{lem:no-maps}.
If it is constant, the image would belong to $C(K)$ by the geometric integrality of $V$, again a contradiction.\end{proof}
\section{Proof of the main theorem and the construction of the parameter space}\label{sec:construction}

The next proposition supplies the desired extension $F/K$ used in the main theorem. 

\begin{proposition}\label{prop:parameter}
For every finite group $G$, there exist a smooth geometrically integral quasi-projective variety $V/\mathbb Q$ and a smooth projective compactification $X/\mathbb Q$ of $V$ such that:
\begin{enumerate}
\item there is a finite Galois extension $E/\mathbb Q(V)(t)$ with
Galois group isomorphic to $G$, in which $\mathbb Q(V)$ is relatively algebraically closed;
\item $H^0(X,\Omega^1_{X/\mathbb Q})=0$.
\end{enumerate}
\end{proposition}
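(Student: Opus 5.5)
The plan is to take $V$ to be a geometrically irreducible component of an arithmetic Hurwitz space, as the introduction indicates. \textbf{Step 1} fixes the group data. Following Fried--V\"olklein \cite{FV1991}, embed $G$ as a quotient of an auxiliary finite group $\tilde G$ (for instance $G$ times a suitable product of symmetric groups, or a fibre-product construction). We choose a tuple of conjugacy classes $\mathbf c=(c_1,\dots,c_r)$ of $\tilde G$ with the following properties:
\begin{enumerate}
\item each class occurs with large multiplicity;
\item the tuple is rational, i.e.\ $\mathbf c$ is stable under $\chi(\sigma)$-th powers for all $\sigma\in\Gal(\overline{\Q}/\Q)$, where $\chi$ is the cyclotomic character;
\item the Hurwitz space $\mathcal H=\mathcal H^{\mathrm{in}}_{\tilde G,\mathbf c}$ of inner covers with trivial center has a component defined over $\Q$.
\end{enumerate}
Fried--V\"olklein show that rational conjugacy classes with enough repetition give a component $V_0$ which is geometrically irreducible over $\Q$. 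This uses the Conway--Parker connectivity theorem, after arranging that the Schur multiplier is generated by commutators of elements of the classes.

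\textbf{Step 2} builds the realization. Since $Z(\tilde G)=1$, $\mathcal H$ is a fine moduli space. Over $\Q(V_0)$ the universal family then yields a Galois cover of $\mathbb P^1_{\Q(V_0)}$ with group $\tilde G$, and it is regular because the fibre over the generic point is geometrically connected. Passing to the quotient by the kernel of $\tilde G\to G$ gives a regular $G$-extension $E/\Q(V_0)(t)$, which is (i). We take $V$ to be $V_0$, or its smooth locus; $\mathcal H$ is already smooth as a finite étale cover of $\Conf_r$. By Hironaka resolution we choose a smooth projective compactification $X$.

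\textbf{Step 3}, showing $H^0(X,\Omega^1_X)=0$, is the main obstacle. Since $h^{1,0}$ is a birational invariant of smooth projective varieties, and $H^0(X,\Omega^1)\otimes\mathbb C\hookrightarrow H^1(X_{\mathbb C},\mathbb C)$ with image contained in the image of the restriction $H^1(X)\to H^1(V)$ (the lowest weight part $W_1H^1(V)$), it suffices to show $W_1H^1(V_{\mathbb C},\Q)=0$. The first thing I would try is to prove $H^1(V_{\mathbb C},\Q)\cong H^1(\Conf_r(\mathbb C),\Q)$ and then note that $H^1(\Conf_r,\Q)\cong\Q$ is pure of weight $2$, being spanned by the class of the discriminant. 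For the isomorphism, Landesman--Levy's homological stability \cite{LL2025} identifies the stable rational homology of Hurwitz-space components with that of configuration space. This holds in degree $1$ once the multiplicities of the classes are large enough, which we can arrange in Step 1 without breaking rationality or irreducibility.

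The delicate points are matching the hypotheses of \cite{LL2025}: the classes must generate $\tilde G$, the classes must be nonsplitting, and the result must apply to a single component rather than the union. One must also check that the comparison map $V\to\Conf_r$ respects mixed Hodge structures. It does, because it is a morphism of varieties. Weight $2$ purity then forces $W_1H^1(V)=0$, hence $H^1(X)$ restricts to zero on $V$. Since restriction $H^1(X)\to H^1(V)$ is injective for $V\subset X$ open dense and $X$ smooth, we get $H^1(X)=0$, and in particular $H^0(X,\Omega^1)=0$. Alternatively, if only $b_1(V)=1$ is available, the same conclusion follows: the discriminant class is nonzero of weight $2$ and pulls back nontrivially, so there is no room for weight $1$.
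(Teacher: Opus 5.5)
Your overall route is the same as the paper's. Part (i) comes from the Fried--V\"olklein auxiliary group, Conway--Parker and the arithmetic Hurwitz space. Part (ii) comes from Landesman--Levy stability, weights and Hodge symmetry. However, Step 3 as written rests on an identification that is false.

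\textbf{The failing step.} Landesman--Levy do not compare a Hurwitz component with the uncolored configuration space $\Conf_r$. They compare it with the \emph{colored} configuration space $B_{\mathbf n}$, where branch points are labelled by their inertia classes. In the Conway--Parker setting every nonidentity class of the centerless group $\tilde G$ must occur. A nontrivial centerless group has $s\ge 2$ nonidentity classes, so $B_{\mathbf n}\ne\Conf_r$. In fact $H^1(B_{\mathbf n},\mathbb Q)$ has dimension $s+\binom{s}{2}\ge 3$: there is one generator for each discriminant $\operatorname{disc}(q_i)$ and one for each resultant $\operatorname{Res}(q_i,q_j)$. The stability isomorphism gives the same value for $b_1(V_{\mathbb C})$. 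So both your claim $H^1(V_{\mathbb C},\mathbb Q)\cong H^1(\Conf_r,\mathbb Q)\cong\mathbb Q$ and your fallback ``$b_1(V)=1$'' are false. The deduction ``weight-$2$ purity forces $W_1H^1(V)=0$'' therefore has no valid input: $H^1(\Conf_r)\to H^1(V)$ is not surjective.

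\textbf{The repair.} Use the paper's argument: take the colored branch map $V_{\mathbb C}\to B_{\mathbf n}$. It is only defined over $\mathbb C$, which is harmless here. $B_{\mathbf n}$ is Zariski open in $\mathbb A^{r}$, hence open in $\mathbb P^r$. By Deligne, $W_1H^1(B_{\mathbf n},\mathbb Q)\cong H^1(\mathbb P^r,\mathbb Q)=0$. The stability isomorphism on $H^1$, together with strictness of the weight filtration, then gives $W_1H^1(V_{\mathbb C},\mathbb Q)=0$. From there your remaining argument goes through: injectivity of $H^1(X)\to H^1(V)$ with image $W_1$, then Hodge symmetry.

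\textbf{Matching the hypotheses of Landesman--Levy.} You also need to actually verify these rather than list them as delicate. Their theorem concerns components of the \emph{pointed} Hurwitz space, whereas your $V_0$ is an unmarked (inner) component. There are two ways to bridge this.
\begin{enumerate}
\item Take $V$ to be the marked space, namely the preimage of $\infty$ in the universal family, as the paper does. This space stays connected because simultaneous conjugation of a generating product-one tuple is realized inside its braid orbit.
\item Keep $V_0$ and observe that the finite \'etale marked cover $P\to V_0$ is a union of components of the pointed Hurwitz space. Pullback $H^1(V_{0,\mathbb C},\mathbb Q)\hookrightarrow H^1(P_{\mathbb C},\mathbb Q)$ is an injective morphism of mixed Hodge structures. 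Hence $W_1H^1(P_{\mathbb C})=0$ forces $W_1H^1(V_{0,\mathbb C})=0$.
\end{enumerate}
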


$V$ will be constructed using~\cite{FV1991}, which will give (i).
For (ii), a homological stability theorem in~\cite{LL2025} compares the first cohomology of $V_{\mathbb C}$ with that of a concrete open subset of affine space, which can then be used to show that the first cohomology of a smooth projective compactification of $V$ vanishes. This enables us to deduce the vanishing of $1$-forms; see~\S\ref{sec:cohomological-inputs}.

Before proving Proposition~\ref{prop:parameter}, let's first see how it implies the main theorem.
\begin{proof}[Proof of Theorem~\ref{thm:main}]
Let $C/k_0$ have positive genus, and let $K$ be an existentially closed model of $T_{C,k_0}$. Fix a finite group $G$.
Choose $V/\Q$ as in Proposition~\ref{prop:parameter} and put $F=K(V_K)$. Since $V$ is geometrically integral, this is a regular extension of $K$ containing $\Q(V)$.
Corollary~\ref{cor:avoidance} gives $C(F)=\varnothing$.

The regular $G$-realization over $\Q(V)$ remains a regular $G$-realization over $F$ by Lemma~\ref{lem:constants}.
Since $F$ avoids $C$, existential closedness gives $K\ec F$;
Lemma~\ref{lem:descent} therefore descends the realization to $K$.
Since $G$ was arbitrary, $\mathrm{RIGP}$ is solvable over $K$.
\end{proof}
Moreover, the above proof provides a uniform witness in the following sense.
\begin{corollary}\label{cor:uniform}
For each finite group $G$, there is an integer $n_G$ such that the following holds: Let $k_0$ be a field of characteristic $0$, and  $C/k_0$ be a smooth geometrically integral curve of positive genus with $C(k_0)=\varnothing$. 
Then $K\models\REG(G,n_G)$ for any $K/k_0$ that is existentially closed in the class of fields avoiding $C$.
In particular, $n_G$ is independent of the excluded positive-genus curve and its field of definition.
\end{corollary}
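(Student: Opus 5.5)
The idea is to pin down $n_G$ entirely from data over $\Q$, before any $k_0$, $C$ or $K$ enters. Fix $G$ and choose $V/\Q$ and $X$ as in Proposition~\ref{prop:parameter}, with the regular realization $E/\Q(V)(t)$. Note that $V$ depends only on $G$. By Proposition~\ref{prop:witness}, there is some $n$ with $\Q(V)\models\REG(G,n)$. We take the least such $n$ and call it $n_G$. This $n$ is a function of $G$ and the chosen $V$ alone.

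Next we show that the same sentence $\REG(G,n_G)$ holds in every base change. Let $K$ be any field of characteristic $0$ and put $F=K(V_K)$. We must check $F\models\REG(G,n_G)$. Since $\REG(G,n_G)$ is existential and $\Q(V)\subseteq F$ is a field extension, truth goes up: existential sentences are preserved under passing to extension structures. So $F\models\REG(G,n_G)$ directly, and no appeal to Lemma~\ref{lem:constants} is needed. This is the cleaner route, since it avoids checking that the degree bounds survive base change.

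Now let $K/k_0$ be existentially closed in the class of fields avoiding $C$. By Corollary~\ref{cor:avoidance}, which requires only $g(C)\ge1$ and the vanishing $H^0(X,\Omega^1_{X/\Q})=0$, we have $C(F)=\varnothing$. Existential closedness then gives $K\ec F$. Since $\REG(G,n_G)$ is an existential sentence in the pure ring language, with no constants from $k_0$, it descends, so $K\models\REG(G,n_G)$. Nothing in this argument depended on $C$ or $k_0$ beyond the avoidance step, which is uniform. This gives the independence claim.

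The only delicate point is confirming that $\REG(G,n)$ is a genuine existential sentence without parameters. Koenigsmann's sentences quantify existentially over the coefficients of the defining polynomials and the $G$-action, subject to conditions like irreducibility and regularity. One must make sure these conditions are expressed existentially, for instance via the description in \cite[Theorem~3.1]{Koe}, rather than relying on the ``in particular'' clause of Proposition~\ref{prop:witness}. Granting the proposition as stated, with existential sentences, the argument is immediate, because existential sentences go up to the extension $F\supseteq\Q(V)$ and down along $K\ec F$.
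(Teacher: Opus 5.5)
Your argument is correct and is essentially the paper's: the corollary is read off from the proof of Theorem~\ref{thm:main}, because the parameter variety $V$ of Proposition~\ref{prop:parameter} depends only on $G$, and both the avoidance step (Corollary~\ref{cor:avoidance}) and the descent along $K\ec F$ are uniform in $k_0$ and $C$. The one small difference is how you move $\REG(G,n_G)$ from $\Q(V)$ up to $F=K(V_K)$: you use upward persistence of existential sentences rather than base-changing the realization via Lemma~\ref{lem:constants}, which avoids having to check that base change keeps the same degree parameter $n$.
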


We now return to the proof of Proposition~\ref{prop:parameter}. We first gather some facts about parameter spaces that will be proved later in the section.
All fields we work with in the remaining part of the section are subfields of $\mathbb{C}$. Some of the facts may be well-known; we include them for the sake of completeness. 
\subsection{From a family of covers to a field extension}
Let $F$ be a field of characteristic $0$. A regular Galois extension $E/F(t)$ is given by a finite map
\[
 Y\longrightarrow\mathbb P^1_F,
 \qquad E=F(Y),
\]
where $Y$ is a smooth projective geometrically integral curve.
The elements of the Galois group act as automorphisms of $Y$ over the fixed target $\mathbb P^1_F$. Conversely, such a cover, with a specified action identifying its Galois group with a finite group $\Gamma$, gives a regular $\Gamma$-extension of $F(t)$.
We call it a \emph{$\Gamma$-cover}.

A \emph{family} of $\Gamma$-covers over a variety $V/\mathbb Q$ is a map
\[
 \mathcal Y\longrightarrow\mathbb P^1_{\mathbb Q}\times V
\]
whose fiber at $v\in V$ is a $\Gamma$-cover $Y_v\to\mathbb P^1$.

The generic point of $V$ has residue field $F=\mathbb Q(V)$.
The fiber above the generic point is a cover $Y\to\mathbb P^1_F$.
Its coefficients are rational functions in the parameters, now viewed as elements of $F$, and $t$ still denotes a transcendental element over $F$.
Thus a family over $V$ immediately gives a regular realization over $\mathbb Q(V)$. 

For the proof, we first realize a larger finite group $\Gamma$ with a surjection $\pi:\Gamma\twoheadrightarrow G$. This gives freedom to control our connected parameter space $V$. Once a regular $\Gamma$-extension $E_0/F(t)$ has been constructed, the fixed field $E_0^{\ker\pi}$ gives the required group $G$ via the correspondence (in characteristic 0) between function fields of transcendence degree $1$ and smooth projective curves.

\subsection{The two facts about parameter spaces}

We first explain the objects parametrized by the Hurwitz spaces used below. For these definitions, we work over $\mathbb C$ and fix a finite group $\Gamma$.

\smallskip
\noindent\textbf{Marked covers.}
Given a $\Gamma$-cover $f:Y\longrightarrow\mathbb P^1_{\mathbb C}$ which is unramified above $\infty\in \mathbb{P}^1_\mathbb{C}$, a \emph{marking} is a chosen point $y_\infty\in f^{-1}(\infty)$.

An isomorphism of marked $\Gamma$-covers is an isomorphism of covering curves that commutes with the maps to $\mathbb P^1_{\mathbb C}$ and the specified $\Gamma$-actions and sends one marked point to the other.

\smallskip
\noindent\textbf{Monodromy and inertia classes.}
Let $B\subseteq\mathbb A^1_{\mathbb C}$ be the set of branch points of $f$ (i.e., the image of the ramification points of $f$). The restriction $f^{-1}(\mathbb P^1_{\mathbb C}\setminus B) \to \mathbb P^1_{\mathbb C}\setminus B$ is finite \'etale and a topological covering map with respect to the complex analytic topology.

Fix $b\in B$. Take a loop starting at $\infty$ that goes towards $b$, travels once counterclockwise around a small circle enclosing $b$ and hitting no other branch point, and returns along the same path. Consider lifting this loop to the covering curve starting at the marked point $y_\infty\in f^{-1}(\infty)$.

Since $\Gamma$ acts freely and transitively on $f^{-1}(\infty)$, the endpoint of the lifted loop is uniquely of the form $g_b\cdot y_\infty \text{ for some }g_b\in\Gamma.$

The element $g_b$ is the \emph{local monodromy} associated with these choices. Changing the path or the marked point changes $g_b$ by conjugation. Its conjugacy class is therefore well defined and is called the \emph{inertia class} at $b$.
It is nonidentity since $b$ is a branch point.

Let $c_1,\ldots,c_s$ denote the nonidentity conjugacy classes of $\Gamma$, and take nonnegative integers $n_1,\ldots,n_s$.
We consider marked $\Gamma$-covers with exactly $n_i$ branch points of inertia class $c_i$.

\smallskip
\noindent\textbf{Families of covers and the Hurwitz space.}
A family of marked covers over a complex variety $S$ consists of a finite morphism $f_S:\mathcal Y\longrightarrow\mathbb P^1_{\mathbb C}\times S$, a $\Gamma$-action on $\mathcal{Y}$ over $\mathbb P^1_{\mathbb C}\times S$, and a section above $\infty$
\[
\sigma:S\longrightarrow\mathcal Y,
\qquad
f_S\circ\sigma=(\infty,\operatorname{id}_S).
\]
Moreover, $\mathcal Y\to S$ is a smooth proper family of geometrically connected curves, and the branch divisor is finite \'etale over $S$ and disjoint from $\{\infty\}\times S$. Thus each fiber over $s\in S$, with its action and marking, is a $\Gamma$-cover with the prescribed branching data.

A \emph{fine marked Hurwitz space} for these data is a parameter variety $P$ universal in the following sense:
For every complex variety $S$, pullback gives a natural bijection
\[
\operatorname{Hom}_{\mathbb C}(S,P)
\;\simeq\;
\left\{
\begin{array}{c}
\text{families over $S$ of marked $\Gamma$-covers}\\
\text{with the prescribed branching data}
\end{array}
\right\}\big/\text{isomorphism}.
\]
In particular, points of $P(\mathbb C)$ correspond to isomorphism classes of individual marked covers.
The universal property says more: every algebraic family of such covers is obtained by pulling back the universal family along a unique map to $P$.

Fact~\ref{fact:Hurwitz} says the following: For a suitable auxiliary group $\Gamma$ and sufficiently large multiplicities $n_i=N$, the parameter space and its universal family are defined over $\mathbb Q$, and the parameter space is smooth, affine, and geometrically integral.

\begin{fact}
\label{fact:Hurwitz}
For every nontrivial finite group $G$, there is a finite group $\Gamma$ with $Z(\Gamma)=1$, a surjection $\pi:\Gamma\twoheadrightarrow G$ with the following property:

We write $c_1,\ldots,c_s$ for all the nonidentity conjugacy classes of $\Gamma$. Let $e=\exp(\Gamma)=\operatorname{lcm}\{\operatorname{ord}(g):g\in\Gamma\}$. For every sufficiently large multiple $N$ of $e$, there is a smooth geometrically integral affine variety $P_N/\mathbb Q$ with a universal family of marked $\Gamma$-covers. Each cover is unramified above infinity and has exactly $N$ branch points of each inertia class $c_i$, hence $sN$ branch points in total.
\end{fact}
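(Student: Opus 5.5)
We will prove Fact~\ref{fact:Hurwitz} by invoking the Fried--V\"olklein auxiliary group construction \cite{FV1991} together with the standard theory of (marked, inner) Hurwitz spaces, checking the remaining claims one at a time.

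\emph{Choice of $\Gamma$.} Given $G\neq 1$, we choose a finite group $\Gamma$ with trivial center and a surjection $\pi:\Gamma\twoheadrightarrow G$. The simplest choice is the wreath product $\Gamma=G\wr S_m$ or $G\times H$ with $H$ a centerless group; the goal is to arrange that $Z(\Gamma)=1$, which yields fineness of the marked Hurwitz space. Any central element would give an extra automorphism of a marked cover fixing $y_\infty$; with $Z(\Gamma)=1$ the automorphism group of a marked $\Gamma$-cover is trivial, so the moduli problem is representable. We also want $\Gamma$ to satisfy the hypothesis of Fried--V\"olklein guaranteeing a geometrically irreducible component defined over $\Q$, so we follow their construction of $\Gamma$ with this in mind rather than an ad hoc choice.

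\emph{Existence and basic properties of $P_N$.} Over $\mathbb C$, the space of marked covers with given branching data is a finite \'etale cover of the configuration space $\Conf_{sN}(\mathbb A^1)$. It is parametrized by Nielsen tuples $(g_1,\dots,g_{sN})$ generating $\Gamma$, with $\prod g_i=1$ and exactly $N$ entries in each $c_i$, modulo the braid action; no conjugation quotient is needed because of the marking. Since $\Conf_{sN}(\mathbb A^1)$ is smooth and affine, and the map is finite \'etale, $P_N$ is smooth and affine. By Riemann existence and $Z(\Gamma)=1$, there is a universal family. Field of definition: the branching data is the full set of nonidentity classes, each with the same multiplicity $N$. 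It is therefore stable under the cyclotomic action $c\mapsto c^{a}$ for $a\in(\mathbb Z/e)^\times$, which only permutes the classes. Hence, by the branch cycle argument, the Hurwitz space descends to $\Q$, with the universal family descending since the moduli problem is defined over $\Q$.

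\emph{Geometric integrality.} This is the main obstacle: we must show that the braid group acts transitively on the relevant Nielsen classes once $N$ is a large multiple of $e$. Restricting to generating tuples, the natural tool is the Conway--Parker theorem in the form used by Fried--V\"olklein. After replacing $\Gamma$ by a suitable cover where the Schur multiplier is generated by commutators of elements in the given classes (their auxiliary construction), the components are classified by a lifting invariant. That invariant is trivial for the chosen data, so for $N\gg0$ there is a single orbit. If the lifting invariant is not automatically trivial, we instead take the component containing the class of $N$ repeated ``balanced'' tuples and use that it is fixed by $\Gal(\bar\Q/\Q)$. The condition $e\mid N$ is used to make the product-one condition compatible and the invariant trivial. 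Geometric integrality of that component gives $P_N$ as required.
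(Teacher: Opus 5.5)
Your architecture matches the paper's: the Fried--V\"olklein centerless $\Gamma$ with the Schur-cover commutator property, the rational multiset consisting of every nonidentity class $N$ times, a finite \'etale cover of $\Conf_{sN}(\mathbb A^1)$, and Conway--Parker. However, two steps that the statement needs are missing.

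\emph{Marked versus unmarked transitivity.} Conway--Parker (Fact~\ref{fact:braid-transitivity}) gives a single braid orbit only \emph{modulo simultaneous conjugation by $\Gamma$}. That is connectedness of the unmarked inner space. You correctly say that $P_N$ is parametrized by actual tuples, but you then apply Conway--Parker as if it gave transitivity on actual tuples. The missing idea is that, for generating product-one tuples, simultaneous conjugation is itself realized by braids:
\begin{itemize}
\item pushing $g_r$ to the front by Hurwitz moves gives $(g_r,g_r^{-1}g_1g_r,\ldots,g_r^{-1}g_{r-1}g_r)$;
\item pushing the first entry back to the end is a cyclic shift, because the product is still $1$;
\item the result is the tuple conjugated by $g_r$;
\item since braids commute with conjugation and the $g_j$ generate $\Gamma$, every simultaneous conjugate lies in the orbit.
\end{itemize}
Your lifting-invariant discussion does not supply this.

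\emph{Nonemptiness.} Conway--Parker asserts a single orbit only if the Nielsen set is nonempty, and you never exhibit a generating product-one tuple with exactly $N$ entries in each $c_i$. This is what $e\mid N$ is for; it is the Schur-cover commutator condition, not $e\mid N$, that rules out extra components. Take blocks of $e$ copies of each nonidentity element. This gives a generating product-one tuple with $e|c_i|$ entries in $c_i$. For $e\mid N$ and $N\geq e\max_i|c_i|$, pad with further $e$-blocks of class representatives until each class occurs exactly $N$ times. Your fallback of taking a Galois-stable component is unjustified, and it is unnecessary once $\Gamma$ is chosen as above.

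Some side claims are also wrong.
\begin{itemize}
\item Neither $G\times H$ with $H$ centerless nor $G\wr S_m$ is centerless when $Z(G)\neq 1$: each contains a copy of $Z(G)$. So you genuinely need the Fried--V\"olklein group.
\item Your reason for $Z(\Gamma)=1$ is off. A central element moves $y_\infty$, since $\Gamma$ acts freely on the fiber over $\infty$, so marked covers are rigid regardless of the center.
\item In the paper, centerlessness is what makes the \emph{unmarked} space $H_N$ a fine moduli space over $\Q$. $P_N=f^{-1}(\{\infty\}\times H_N)$ is then cut out as the fiber of the universal family. This gives the $\Q$-structure of $P_N$ and its universal family directly, instead of through the separate descent argument you invoke.
\end{itemize}
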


This follows from Fried-V\"olklein's auxiliary-group construction and arithmetic Hurwitz spaces~\cite{FV1991}; the details of the proof are given in \S\ref{sec:arithmetic-verification}.
In particular, a point of $P_N(F)$ specifies a cover over $F$, its $\Gamma$-action, and an $F$-rational point above infinity.

To state the second fact used, we describe the space that remembers only the branch points and their inertia classes.
Regard each class $c_i$ as a color. A colored configuration with $N$ points of each color can be encoded by monic polynomials
\[
 q_i(T)=T^N+a_{i,1}T^{N-1}+\cdots+a_{i,N},
 \qquad 1\leq i\leq s.
\]
The roots of $q_i$ are the points of color $i$. They must be distinct, and different $q_i$'s have no common root. Consequently, the colored configuration space is
\[
 B_N=\left\{(q_1,\ldots,q_s):
 \begin{array}{l}
 \text{each }q_i\text{ is squarefree},\\
 \text{the }q_i\text{ are pairwise coprime}
 \end{array}\right\}
 \subseteq\mathbb A^{sN}_{\mathbb C}.
\]
The coordinates here are the $sN$ coefficients $a_{i,j}$.
The conditions say that discriminants and pairwise resultants are nonzero, so $B_N$ is a nonempty open subset of affine space.

The \emph{colored branch map}
\[
 b_N:P_{N,\mathbb C}\longrightarrow B_N
\]
forgets the covering curve and the marking, keeping the branch points with their colors. It is an algebraic map. We use this map over $\mathbb C$; the individual colors need not descend to $\mathbb Q$.

\begin{fact}
\label{fact:LL}
For the group $\Gamma$ of Fact~\ref{fact:Hurwitz} and every sufficiently large multiple $N$ of $e$, the colored branch map induces an isomorphism
\begin{equation}\label{eq:stability}
 b_N^*:H^1(B_N,\mathbb Q)
       \xrightarrow{\ \sim\ }H^1(P_{N,\mathbb C},\mathbb Q).
\end{equation}
\end{fact}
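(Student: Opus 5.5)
The plan is to derive Fact~\ref{fact:LL} from the homological stability theorem of Landesman--Levy \cite{LL2025}. That theorem concerns Hurwitz spaces of $\Gamma$-covers in which every nonidentity conjugacy class occurs with the same large multiplicity. The steps are to identify $P_{N,\mathbb C}$ with one of their spaces, compare it with $B_N$ through the colored branch map, and read off the statement in degree $1$.

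\textbf{Step 1: the topological model.} Over $\mathbb C$, the map $b_N$ is a finite covering space; it is \'etale because the branch divisor is finite \'etale and the space is fine, using $Z(\Gamma)=1$ and the marking. By Riemann existence, its fibre over a base configuration is the set of tuples $(g_1,\dots,g_{sN})\in\Gamma^{sN}$ with the following properties: $g_j$ lies in the class assigned to the $j$-th point, the product is $1$, and the tuple generates $\Gamma$. The monodromy action is the colored (pure-by-color) braid group acting by Hurwitz moves. So $P_{N,\mathbb C}$ is a union of components of the colored Hurwitz space. By Fact~\ref{fact:Hurwitz}, $P_N$ is geometrically integral, so it is a single component, namely a single orbit, chosen by Fried--V\"olklein's lifting invariant construction.

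\textbf{Step 2: apply stability.} Landesman--Levy show that for $N$ large (a multiple of $e$), the rational homology of each such component agrees in a stable range with that of the base. Equivalently, they show that the transfer $b_{N,*}$ followed by $b_N^*$ on $H_i$ for small $i$ induces an isomorphism $H_i(P_{N,\mathbb C},\mathbb Q)\cong H_i(B_N,\mathbb Q)$, with the stable range growing with $N$. Here $B_N$ is the classifying space of the colored braid group. I would check that the comparison map in their theorem is indeed induced by $b_N$, and that their hypothesis (all classes with equal multiplicity, a component of the generating locus) matches Step~1. Then for $N$ large enough that $1$ lies in the stable range, I obtain $b_N^*\colon H^1(B_N,\mathbb Q)\to H^1(P_{N,\mathbb C},\mathbb Q)$ as the dual of an isomorphism on $H_1$. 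This step can be cross-checked: since $b_N$ is finite \'etale, $b_N^*$ is always injective rationally, by transfer. So only surjectivity needs stability, and this amounts to the statement that $H_1$ of the component has no classes beyond the colored braid abelianization.

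\textbf{Main obstacle.} The hard step is matching conventions. Landesman--Levy may state their result for unmarked covers, for $\mathbb A^1$ rather than $\mathbb P^1$ with $\infty$ unbranched, or for a Hurwitz space quotiented differently. The marked space with $Z(\Gamma)=1$ is a finite \'etale cover of, or equal to, their space. If their theorem concerns a slightly different space, I would pass between the two using the fact that both are components of finite covers of $B_N$ related by a finite group action. Rational cohomology of a finite quotient equals the invariants, and the $H^1$ computed from $B_N$ is already invariant, so the isomorphism transfers. I would also need to ensure that the Fried--V\"olklein component is one to which their theorem applies, and not a component excluded by their hypotheses; the inclusion of all nonidentity classes should guarantee this.
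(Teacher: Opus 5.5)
Your main line is the paper's proof. $P_{N,\mathbb C}$ is the product-one locus of Landesman--Levy's pointed Hurwitz space $\CHur^c_{N,\ldots,N}$ on the generating locus, hence a single component by Fact~\ref{fact:Hurwitz}; their Theorem~1.4.2 with $i=1$, $N>I+J$ and coefficients $\mathbb Z[1/|\Gamma|]\subseteq\mathbb Q$ then gives the isomorphism on $H_1$, which dualizes to $b_N^*$ on $H^1$.

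The one flaw is the fallback in your last paragraph. Passing to a finite quotient only works downward: if $b^*$ is an isomorphism onto $H^1(\widetilde Z,\mathbb Q)$ for a cover $\widetilde Z$, the same holds for $\widetilde Z/A$. Stability for the unmarked space $H_{N,\mathbb C}=P_{N,\mathbb C}/\Gamma$ would only give $b_N^*H^1(B_N,\mathbb Q)=H^1(P_{N,\mathbb C},\mathbb Q)^{\Gamma}$, and that says nothing about the non-invariant part. This is moot, because Landesman--Levy's theorem is already stated for pointed (marked) spaces.
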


Here $H^1$ is singular cohomology of the variety as a complex analytic space. Fact~\ref{fact:LL} roughly says that, once there are enough branch points of every color, remembering only the colored branch configuration is good enough to compute $H^1$ correctly.
It follows from Landesman-Levy's theorem \cite[Theorem~1.4.2]{LL2025}, which we explain in \S\ref{sec:stability-verification}.
\subsection{Vanishing of cohomology implies vanishing of $1$-forms}
\label{sec:cohomological-inputs}

Later we will take $V=P_N$ for some $N$, whereas the desired vanishing condition concerns $H^0(X,\Omega^1_{X})$, where $X$ is its projective compactification. We first explain how this can be achieved.

We use the following facts from Hodge theory.
For a smooth complex variety $T$, the vector space $H^1(T,\mathbb Q)$ has a distinguished subspace $W_1H^1(T,\mathbb Q)$, called its \emph{weight-one part}, with these properties:
\begin{enumerate}
\item If $\overline T$ is a smooth projective compactification,
restriction identifies
\[
 H^1(\overline T,\mathbb Q)\simeq W_1H^1(T,\mathbb Q).
\]
\item If an algebraic map $f:T\to S$ induces an isomorphism on
$H^1$, it induces an isomorphism on the weight-one parts as well.
\end{enumerate}
The first property identifies the part coming from a projective compactification. Deligne proves the description of its image in~\cite[Corollary~3.2.17]{Del};  restriction is injective in degree one because loops in $\overline T$ can be moved off the boundary, which has real codimension at least two. Hence we have an isomorphism as needed.
The second property follows from functoriality and strictness for the weight filtration~\cite[Theorems~2.3.5 and~3.2.5]{Del}.

\begin{lemma}\label{lem:compactification}
Let $U/\mathbb C$ be smooth and connected, and let $B$ be a nonempty open subset of $\mathbb A^r_{\mathbb C}$.
Suppose an algebraic map $b:U\to B$ induces an isomorphism
\[
 b^*:H^1(B,\mathbb Q)\xrightarrow{\ \sim\ }H^1(U,\mathbb Q).
\]
Then every smooth projective compactification $\overline U$ of $U$ satisfies $H^0(\overline U,\Omega^1_{\overline U/\mathbb C})=0$.
\end{lemma}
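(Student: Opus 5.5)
The argument combines the two Hodge-theoretic properties just recalled with the fact that a nonempty open subset of affine space has a pure Tate $H^1$. First I show $W_1H^1(B,\mathbb Q)=0$. The complement $Z=\mathbb A^r_{\mathbb C}\setminus B$ is a closed subset. Removing components of codimension $\geq2$ does not change $H^1$, so we may take $Z$ to be a hypersurface with irreducible components $Z_1,\dots,Z_m$.

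The cohomology of $B$ can then be computed from the Gysin (Thom) sequence of the pair. One gets that $H^1(B,\mathbb Q)$ is spanned by the classes $\frac{1}{2\pi i}\,d\log f_j$, where $f_j$ is a defining equation of $Z_j$. Concretely, $H^1(\mathbb A^r)=0$ and $H^2(\mathbb A^r)=0$ give
\[
H^1(B,\mathbb Q)\simeq\ker\bigl(\textstyle\bigoplus_j H^0(Z_j^{\circ})(-1)\to H^2(\mathbb A^r)\bigr)\simeq\mathbb Q(-1)^m ,
\]
after first discarding the singular locus of $Z$, which has codimension $\geq2$ in $\mathbb A^r$. This is pure of weight $2$, so $W_1H^1(B,\mathbb Q)=0$.

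An alternative route avoids the Gysin sequence. Take a smooth projective compactification $\overline B$ of $B$ that contains $\mathbb A^r$ as an open set, for instance a resolution of $\mathbb P^r$ that is an isomorphism over $B$. Then $\overline B$ is rational, so $H^1(\overline B,\mathbb Q)=0$, since $H^1$ is a birational invariant of smooth projective varieties; it vanishes for $\mathbb P^r$. Property (i) then gives $W_1H^1(B)\simeq H^1(\overline B)=0$ directly. I will use this second route, as it sidesteps the bookkeeping of the Gysin sequence.

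Next, property (ii) applied to $b$ gives $W_1H^1(U,\mathbb Q)\simeq W_1H^1(B,\mathbb Q)=0$. Property (i) applied to $U$ (which is smooth and connected) then gives $H^1(\overline U,\mathbb Q)=0$ for every smooth projective compactification $\overline U$.

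Finally, Hodge decomposition on the smooth projective variety $\overline U$ gives $H^1(\overline U,\mathbb C)=H^{1,0}\oplus H^{0,1}$, with $H^{1,0}=H^0(\overline U,\Omega^1_{\overline U/\mathbb C})$. Since $H^1(\overline U,\mathbb C)=H^1(\overline U,\mathbb Q)\otimes\mathbb C=0$, we get $H^0(\overline U,\Omega^1)=0$, as required.

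The main point to check carefully is the first step: that $W_1H^1(B)=0$, i.e.\ that a smooth compactification of $B$ can be chosen rational. This is clear because $B$ is open in $\mathbb A^r\subseteq\mathbb P^r$ and $\mathbb P^r$ itself is already a smooth projective compactification of $B$. Property (i) applies to any smooth projective compactification, so $H^1(\mathbb P^r,\mathbb Q)=0$ suffices and no resolution is needed. Everything else is formal from (i), (ii) and Hodge theory.
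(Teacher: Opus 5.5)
Your proof is correct and is essentially the paper's argument. Property (i) with $\mathbb P^r$ as compactification gives $W_1H^1(B,\mathbb Q)=0$; property (ii) transfers this to $U$; property (i) for $U$ and the Hodge decomposition on $\overline U$ then kill $H^0(\overline U,\Omega^1_{\overline U/\mathbb C})$. The Gysin-sequence route and the detour through a resolution of $\mathbb P^r$ are superfluous, as you note at the end, though the Gysin computation is also correct.
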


\begin{proof}
Since $B$ is open in $\mathbb A^r$, it is also open in $\mathbb P^r$. The latter is a smooth projective compactification of $B$ and has $H^1(\mathbb P^r,\mathbb Q)=0$. Property (i) gives $W_1H^1(B,\mathbb Q)=0$.
By property (ii), the isomorphism $b^*$ induced by $b$ gives $W_1H^1(U,\mathbb Q)=0$. Applying property (i) to $U$ yields $H^1(\overline U,\mathbb Q)=0.$ Finally, Hodge decomposition for the smooth projective variety $\overline U$ gives
\[
 0=H^1(\overline U,\mathbb C)
 \simeq H^0(\overline U,\Omega^1_{\overline U/\mathbb C})
       \oplus
       \overline{H^0(\overline U,\Omega^1_{\overline U/\mathbb C})};
\]
see~\cite[\S1.4, Theorem~1.13, Remark~1.16]{VoisinHodge}.
So both summands are zero.
\end{proof}

\subsection{Proof of Proposition~\ref{prop:parameter}}
\label{sec:proof}

\begin{proof}
For the trivial group, take $V=\mathbb A^1_{\mathbb Q}$, $X=\mathbb P^1_{\mathbb Q}$.
Suppose $G$ is nontrivial.

\smallskip
\noindent\emph{Step 1: The parameter variety.}
Choose $\Gamma$, $\pi$, and $e$ as in
Fact~\ref{fact:Hurwitz}. Choose a multiple $N$ of $e$ sufficiently large so that Facts~\ref{fact:Hurwitz} and \ref{fact:LL} apply, and put $V=P_N, F=\mathbb Q(V).$
The variety $V$ is smooth and geometrically integral over $\mathbb Q$. Resolution of singularities provides a smooth projective compactification $X/\mathbb Q$ \cite{Hironaka-resolution}.

\smallskip
\noindent\emph{Step 2: The regular realization.}
The generic point $\Spec F\to V$ gives a $\Gamma$-cover $Y\to\mathbb P^1_F$ from the universal family.
Its function field $E_0=F(Y)$ is Galois over $F(t)$ with group $\Gamma$. Since $Y$ is geometrically integral, $F$ is relatively algebraically closed in $E_0$.
Setting $E=E_0^{\ker\pi},$ we have $\Gal(E/F(t))\simeq G$.
Geometric integrality of $Y$ implies that $E/F(t)$ is regular, proving (i).

\smallskip
\noindent\emph{Step 3: The vanishing of $1$-forms.}
Fact~\ref{fact:LL} gives the algebraic branch map $b_N:V_{\mathbb C}\to B_N$ and its isomorphism on $H^1$.
The space $B_N$ is open in $\mathbb A^{sN}_{\mathbb C}$, where $s$ is the number of nonidentity conjugacy classes of $\Gamma$.
Lemma~\ref{lem:compactification} therefore gives $H^0(X_{\mathbb C},\Omega^1_{X_{\mathbb C}/\mathbb C})=0$.
Global sections commute with extension of the base field, so
\[
 H^0(X,\Omega^1_{X/\mathbb Q})\otimes_{\mathbb Q}\mathbb C
 \simeq H^0(X_{\mathbb C},\Omega^1_{X_{\mathbb C}/\mathbb C})=0.
\]
Hence $H^0(X,\Omega^1_{X/\mathbb Q})=0$, which proves (ii).
\end{proof}

\subsection{Proof of Fact~\ref{fact:Hurwitz}}
\label{sec:arithmetic-verification}

We first isolate the three facts used in the proof.\\
The first one is \cite[\S2.4, Lemma~2]{FV1991}.
\begin{fact}
\label{fact:auxiliary-group}
Every finite group $G$ is a quotient of a finite centerless group $\Gamma$ admitting a Schur cover
\[
1\longrightarrow M\longrightarrow S
 \longrightarrow\Gamma\longrightarrow 1
\]
such that
\[
M=
\bigl\langle [u,v]:
u,v\in S,\ [u,v]\in M\bigr\rangle.
\]
Here a Schur cover means
\[
M\simeq H_2(\Gamma,\mathbb Z),
\qquad M\subseteq [S,S]\cap Z(S).
\]
\end{fact}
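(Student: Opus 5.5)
The plan is to build $\Gamma$ in two stages. First I make the group centerless while keeping $G$ as a quotient. Then I enlarge it further so that the Schur multiplier is spanned by commutators of commuting lifts.

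\emph{Stage one: centerless.} Fix a nonabelian simple group $A$, say $A=A_5$, and let $\Gamma_0=A\wr G=A^{G}\rtimes G$ with $G$ permuting the factors regularly. The projection $\Gamma_0\twoheadrightarrow G$ is the required surjection. To see $Z(\Gamma_0)=1$, let $z=(a,g)$ be central. Because $z$ commutes with the base $A^G$ and $G$ acts faithfully on the coordinates, $g=1$. Then $a$ lies in $Z(A^G)=1$.

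\emph{Stage two: the Schur cover.} Every finite group has a Schur cover $1\to M\to S\to\Gamma\to1$ with $M\simeq H_2(\Gamma,\mathbb Z)$ and $M\subseteq[S,S]\cap Z(S)$. This is the standard stem-extension construction from a free presentation, $S=[F,F]\cdot$ a complement of $(R\cap[F,F])/[F,R]$ in $R/[F,R]$. So only the extra condition needs work:
\[
M=\langle [u,v]:u,v\in S,\ [u,v]\in M\rangle .
\]
The key observation is that $[u,v]\in M$ exactly when the images $\bar u,\bar v$ commute in $\Gamma$. Also, $[u,v]$ depends only on $(\bar u,\bar v)$ because $M$ is central. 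Under $M\simeq H_2(\Gamma,\mathbb Z)$, this commutator is the image of the fundamental class of $\mathbb Z^2\to\Gamma$. Hence the subgroup in question is the span of the images of $H_2$ of the abelian subgroups of $\Gamma$. The condition therefore says that $H_2(\Gamma,\mathbb Z)$ is generated by toral classes, i.e.\ the Bogomolov multiplier $B_0(\Gamma)$ vanishes.

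It remains to choose $\Gamma$, centerless with quotient $G$, such that $H_2(\Gamma,\mathbb Z)$ is toral. I would try first to arrange this inside the wreath construction, perhaps replacing $A$ by a perfect centerless group with toral multiplier, and compute $H_2$ of $A\wr G$ via the Lyndon--Hochschild--Serre spectral sequence. The relevant contributions are $H_2(A^G)_G$, the term $H_1(G,H_1(A^G))$, which vanishes since $A$ is perfect, and $H_2(G)$. The summand $H_2(A^G)_G$ is toral if $H_2(A)$ is: $A_5$ has multiplier $\mathbb Z/2$, which comes from a Klein four-subgroup, hence is toral. Cross terms between distinct coordinates would be handled by Künneth, giving $H_1(A)\otimes H_1(A)=0$.

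The genuinely hard part is the $H_2(G)$ term, since $G$ is arbitrary and $B_0(G)$ may be nonzero. Here I would follow Fried--V\"olklein \cite[\S2.4]{FV1991} and kill or absorb these classes by the wreath structure. The diagonal copy of the regular module makes classes coming from $G$ become toral in $\Gamma$, via commuting pairs $(\bar u,\bar v)$ with $\bar u$ in the base and $\bar v$ projecting to $g$. Failing that, I would replace $\Gamma$ by a suitable further extension chosen so that the transferred classes land in abelian subgroups.

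I expect this last step, showing $B_0(\Gamma)=0$ for the auxiliary group, to be the main obstacle. The centerless condition and the existence of the Schur cover are routine by comparison.
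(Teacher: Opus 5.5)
Your reduction is correct: $A\wr G$ is centerless, and Schur covers exist. Since $M$ is central, the condition $M=\langle [u,v]:[u,v]\in M\rangle$ says exactly that $H_2(\Gamma,\mathbb Z)$ is spanned by toral classes, i.e.\ $\tilde B_0(\Gamma)=0$. The paper gives no argument of its own here and simply cites \cite[\S2.4, Lemma~2]{FV1991}. However, the decisive step is left open, and the mechanism you propose for it cannot work. Toral classes are functorial, because homomorphisms send commuting pairs to commuting pairs. In $A\wr G$ the group $G$ is a retract: the section $G\hookrightarrow A\wr G$ followed by the projection is the identity. Hence $H_2(G)\to H_2(A\wr G)\to H_2(G)$ is the identity, and $H_2(G)/(\text{toral})$ is a direct summand of $H_2(A\wr G)/(\text{toral})$. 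In your spectral sequence, $E^\infty_{2,0}=H_2(G)$ is a split summand, so nothing coming from the base can kill or absorb it. Thus for any $G$ with nonzero Bogomolov multiplier (such $p$-groups exist, e.g.\ those of Saltman and Bogomolov), no choice of $A$ and no diagonal commuting pairs give a valid $\Gamma$. The same holds for any split extension $N\rtimes G$. Your fallback, ``a suitable further extension'', is exactly where the content lies, and it is unspecified.

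The missing idea is to pass first through a non-split cover of $G$ that kills the non-toral classes. For example, write $G=F/R$ with $F$ free of finite rank $d$. Let $m$ be a multiple of $\exp H_2(G,\mathbb Z)$, and set $N=[R,F]R^m$ and $\Gamma_1=F/N$, a finite central extension of $G$. The group $R/[R,F]$ is finitely generated, with torsion subgroup $(R\cap[F,F])/[R,F]\cong H_2(G,\mathbb Z)$ and torsion-free quotient $R/(R\cap[F,F])\cong R[F,F]/[F,F]\le\mathbb Z^d$. Modulo $[R,F]$, the group $N$ becomes $m\cdot R/[R,F]$, which meets the torsion in $m\cdot H_2(G,\mathbb Z)=0$. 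Hence $N\cap[F,F]=[R,F]$. By Hopf's formula, $H_2(\Gamma_1,\mathbb Z)=(N\cap[F,F])/[N,F]$ is then generated by the classes of the commutators $[r,f]$ with $r\in R$ and $f\in F$. These classes are toral, because $rN$ and $fN$ commute in $\Gamma_1$. Your own wreath-product computation now goes through with $G$ replaced by $\Gamma_1$. The group $A_5\wr\Gamma_1$ is centerless and surjects onto $\Gamma_1\twoheadrightarrow G$. Its $H_2$ is generated by two toral pieces: the image of $H_2(A_5^{\Gamma_1})=\bigoplus H_2(A_5)$, which is toral, and the image of $H_2(\Gamma_1)$ under the section, which is toral by construction.
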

The following is the Conway-Parker theorem in  \cite[Appendix]{FV1991}.
\begin{fact}[Conway-Parker transitivity]
\label{fact:braid-transitivity}
Let $\Gamma$ be a finite group admitting a Schur cover as in Fact~\ref{fact:auxiliary-group}. Let $c_1,\ldots,c_s$ denote the nonidentity conjugacy classes of $\Gamma$.

There is an integer $B$ such that, whenever $n_1,\ldots,n_s\geq B$, the tuples
\[
(g_1,\ldots,g_r),\qquad r=n_1+\cdots+n_s,
\]
satisfying
\[
g_1\cdots g_r=1,\qquad
\langle g_1,\ldots,g_r\rangle=\Gamma,
\qquad
\#\{j:g_j\in c_i\}=n_i
\]
form a single braid orbit modulo simultaneous conjugation by $\Gamma$, if nonempty.
The braid action is generated by the Hurwitz moves
\[
(\ldots,a,b,\ldots)
\longmapsto
(\ldots,b,b^{-1}ab,\ldots).
\]
\end{fact}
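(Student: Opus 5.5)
The plan is the Conway--Parker argument as presented in \cite[Appendix]{FV1991}: attach to each tuple a braid-invariant \emph{lifting invariant} with values in $M$, show that for large multiplicities this invariant separates braid orbits, and then use the commutator hypothesis on $M$ to show that only one value of the invariant can occur. Write $\mathcal E(n_1,\ldots,n_s)$ for the set of tuples in the statement, taken modulo simultaneous conjugation. For each class $c_i$ fix a representative $x_i$ and a lift $\tilde x_i\in S$. For $g\in c_i$, write $g=hx_ih^{-1}$ and set $\tilde g=\tilde h\tilde x_i\tilde h^{-1}$. Since $M$ is central this lift is well defined, and it is conjugation-equivariant. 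Define the invariant of $(g_1,\ldots,g_r)\in\mathcal E$ by $\tilde g_1\cdots\tilde g_r\in M$. The Hurwitz move sends $(a,b)$ to $(b,b^{-1}ab)$, and $\widetilde{b^{-1}ab}=\tilde b^{-1}\tilde a\tilde b$, so the partial product $\tilde a\tilde b$ is unchanged. Simultaneous conjugation does not change the invariant either, because $M$ is central. So the invariant is constant on braid orbits.

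\emph{Step 1 (normal form and stabilization).} Consider a tuple with every $n_i\geq B$, where $B$ is large compared to $|\Gamma|$. A generating subtuple of bounded length exists, and by pigeonhole some element of each class occurs many times. Using Hurwitz moves I will first slide a fixed generating block to the front. Next, since $\Gamma$ is generated by that block, I will conjugate any remaining entry to any element of its class while leaving the block fixed. Finally I will gather equal entries into runs of length $\operatorname{ord}(g)$ and into pairs of the form $(g,g^{-1})$. This puts every tuple into the shape (fixed generating block)$\,\cdot\,$(a word made of standard blocks). The remaining freedom is then controlled by a group presented by the Hurwitz relations among classes. This is the Conway--Parker ``stable range'' step: beyond $B$, two tuples with the same multiplicities and the same lifting invariant lie in the same braid orbit. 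Via the Hopf formula, the set of invariants attained by tuples with fixed multiplicities is a coset of the subgroup of $M$ generated by the commutators $[u,v]\in M$ with $u,v\in S$. I will follow the Fried--V\"olklein exposition for this step.

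\emph{Step 2 (the commutator condition).} By Fact~\ref{fact:auxiliary-group}, $M$ is generated by such commutators, so the coset in Step~1 is all of $M$; that is, the invariant takes every value in $M$. Completeness alone therefore does not give a single orbit. What it does give is that changing the invariant by $[u,v]$ is realized by a braid move after inserting a block such as $(\bar u,\bar u^{-1},\bar v,\bar v^{-1})$, which does not affect generation. To land in a single orbit I will normalize using the centerless hypothesis on $\Gamma$ together with the fact that the class multiplicities are fixed. This should show that all lift products for a given $(n_1,\ldots,n_s)$ are identified modulo the relations coming from conjugating blocks inside the tuple. Combined with the completeness from Step~1, this yields one orbit.

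The main obstacle is Step~1, the completeness of the lifting invariant in the stable range. It requires a careful combinatorial reduction showing that every tuple reaches the normal form by Hurwitz moves and that the only obstruction left is the element of $M$. I would import this from the Conway--Parker appendix rather than reprove it. A secondary delicate point is matching the precise role of $M\subseteq[S,S]\cap Z(S)$ and the commutator generation in Step~2, so that the single-orbit conclusion is really forced.
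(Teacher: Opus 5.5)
There is a genuine gap, and it sits exactly where the hypothesis on $M$ should enter.

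First, your lift $g\mapsto\tilde g$ is not well defined. Centrality of $M$ only removes the dependence on $\tilde h$ for a \emph{fixed} $h$. But $h$ is determined by $g$ only up to right multiplication by some $z\in C_\Gamma(x_i)$, and replacing $h$ by $hz$ changes $\tilde g$ by $[\tilde z,\tilde x_i]\in M$. For instance, take $S=D_8=\langle r,f\rangle$, $M=\langle r^2\rangle$, $\Gamma=S/M$: conjugating $f$ by $r$ gives $r^2f\neq f$, although $\bar r$ centralizes $\bar f$. Since $[u,v]\in M$ forces $\bar u,\bar v$ to commute, the hypothesis of Fact~\ref{fact:auxiliary-group} says precisely that these ambiguities generate $M$. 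So whenever $M\neq 1$, there is no $M$-valued invariant of the kind you define.

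Second, even granting the invariant, Steps 1 and 2 contradict each other. A braid- and conjugation-invariant function taking all $|M|$ values on the tuple set forces at least $|M|$ orbits. No normalization ``using the centerless hypothesis'' can then produce a single orbit; centerlessness plays no role in the orbit count, and Step 2 contains no actual argument.

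The missing idea is that the commutator condition makes the obstruction \emph{vanish}. You have the role of $\langle [u,v]\in M\rangle$ reversed: it is the indeterminacy of the lifting invariant, not its range.
\begin{enumerate}
\item Let $\mathcal G$ be the group generated by symbols $\underline g$, $g\in\Gamma\setminus\{1\}$, with relations $\underline g\,\underline h=\underline h\,\underline{h^{-1}gh}$. Then $\underline{g_1}\cdots\underline{g_r}$ is braid invariant by construction.
\item The kernel of $\mathcal G\to\Gamma\times\mathbb Z^s$ (product and class multiplicities) is central, since conjugation by any word acts on the generators through its image in $\Gamma$.
\item Present $\Gamma=F/R$ with $F$ free on the symbols. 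Then $\mathcal G$ is a quotient of $F/[F,R]$, and Hopf's formula shows the kernel is the image of $H_2(\Gamma,\mathbb Z)\cong M$.
\item For commuting $\bar u,\bar v$, the relation $\underline{\bar u}\,\underline{\bar v}=\underline{\bar v}\,\underline{\bar u}$ kills the class of $[u,v]$. So the kernel is a quotient of $M/\langle [u,v]\in M\rangle$, which is trivial by Fact~\ref{fact:auxiliary-group}.
\item Hence all product-one tuples with multiplicities $(n_1,\ldots,n_s)$ have the same image in $\mathcal G$.
\end{enumerate}
Now import the Conway--Parker stable-range lemma from \cite[Appendix]{FV1991}, as the paper does by citing the whole statement: generating tuples with all $n_i$ large and equal image in $\mathcal G$ are braid equivalent modulo simultaneous conjugation. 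This gives a single orbit.
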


For the third fact, see \cite[\S\S2.1-2.2]{FV1991} and \cite[Theorem~1.7, \S1.3.1, and Proposition~2.5]{Debes2026}.
\begin{fact}
\label{fact:arithmetic-hurwitz}
Let $\Gamma$ be a finite centerless group, and fix a multiset of $r$ nonidentity conjugacy classes that is invariant under the power maps
$c\longmapsto c^a$, $\gcd(a,|\Gamma|)=1$.

There is a Hurwitz space $H/\mathbb Q$, finite \'etale over $\Conf_r(\mathbb A^1)$, parametrizing geometrically connected $\Gamma$-covers of the fixed projective line with this inertia-class multiset and with all branch points finite.
It is a fine moduli space and carries a universal family.

Its connected components over $\mathbb C$ correspond to braid orbits on generating product-one monodromy tuples modulo simultaneous conjugation.

\end{fact}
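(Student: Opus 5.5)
The plan is to build $H$ first over $\mathbb C$ as a topological covering of $\Conf_r(\mathbb A^1)$, algebraize it, show it represents the moduli functor, and only then descend it to $\mathbb Q$. This follows the treatment in \cite[\S\S2.1-2.2]{FV1991} and \cite{Debes2026}. Fix a base configuration $\mathbf b_0\in\Conf_r(\mathbb A^1)(\mathbb C)$ and a standard system of generating loops of $\pi_1(\mathbb P^1_{\mathbb C}\setminus\mathbf b_0,\infty)$.

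\textbf{Step 1: construction over $\mathbb C$.} By the Riemann existence theorem, geometrically connected $\Gamma$-covers branched exactly at $\mathbf b_0$, with the prescribed inertia multiset, correspond bijectively to the finite set
\[
\mathcal N=\{(g_1,\ldots,g_r):g_1\cdots g_r=1,\ \langle g_1,\ldots,g_r\rangle=\Gamma,\ \text{classes as prescribed}\}/\mathrm{Inn}(\Gamma).
\]
Here the conjugacy classes $c_i$ are those of $\Gamma$, and the generation condition encodes geometric connectedness. The fundamental group of $\Conf_r(\mathbb A^1)$ is the Artin braid group, which acts on $\mathcal N$ through the Hurwitz moves. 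This action determines a finite topological covering $H^{\mathrm{an}}\to\Conf_r(\mathbb A^1)^{\mathrm{an}}$. By the Grauert--Remmert form of Riemann existence, $H^{\mathrm{an}}$ is the analytification of a unique finite \'etale cover $H_{\mathbb C}\to\Conf_r(\mathbb A^1)_{\mathbb C}$. Its connected components are the orbits of the monodromy action on a fiber, which are exactly the braid orbits on $\mathcal N$. This gives the last claim of the Fact.

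\textbf{Step 2: fine moduli.} An automorphism of a $\Gamma$-cover commuting with the $\Gamma$-action is given by an element of the centralizer of the monodromy group. Since the monodromy tuple generates $\Gamma$, this centralizer is $Z(\Gamma)=1$. So the covers have no nontrivial automorphisms. The local families obtained by relative Riemann existence over small simply connected opens of $\Conf_r$ are therefore unique up to unique isomorphism, and they glue to a universal family over $H^{\mathrm{an}}$. This family algebraizes, again by relative Riemann existence for the proper family, and represents the functor of families over $\mathbb C$-varieties.

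\textbf{Step 3: descent to $\mathbb Q$.} The moduli functor makes sense over $\mathbb Q$: families of geometrically connected $\Gamma$-covers of $\mathbb P^1$, with \'etale branch divisor in $\mathbb A^1$ and inertia multiset in the given orbit of classes. The key input is the branch cycle lemma. For $\sigma\in\Gal(\overline{\mathbb Q}/\mathbb Q)$, applying $\sigma$ to a cover raises its inertia classes to the power $\chi(\sigma)$, where $\chi$ is the cyclotomic character. Because the multiset is invariant under $c\mapsto c^a$ for $\gcd(a,|\Gamma|)=1$, the functor is stable under $\Gal(\overline{\mathbb Q}/\mathbb Q)$. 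Since representing objects are unique up to unique isomorphism, rigidity supplies canonical descent data on $H_{\overline{\mathbb Q}}$ together with its universal family. Before this, $H_{\mathbb C}$ and its family descend to $\overline{\mathbb Q}$ by a spreading-out argument, using that finite \'etale covers are invariant under extension of algebraically closed fields. Galois descent for quasi-projective (indeed finite over affine) schemes then yields $H/\mathbb Q$, finite \'etale over $\Conf_r(\mathbb A^1)_{\mathbb Q}$, with a universal family over $\mathbb Q$.

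\textbf{Main obstacle.} I expect Step 3 to be the most delicate. One must check carefully that the descent data satisfy the cocycle condition; this is where the fine-ness from Step 2 is essential. One must also check that representability holds for families over arbitrary $\mathbb Q$-bases, not only over $\mathbb C$. For these points I would rely on \cite[Theorem~1.7, Proposition~2.5]{Debes2026} rather than reprove them.
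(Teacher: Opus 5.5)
Your proposal is correct and follows essentially the paper's route. The paper gives no independent proof; it cites Fried--V\"olklein \cite[\S\S2.1-2.2]{FV1991} and D\`ebes \cite{Debes2026}, and your outline is the construction found there: Nielsen classes modulo $\mathrm{Inn}(\Gamma)$ with the braid action, Grauert--Remmert algebraization, rigidity from $Z(\Gamma)=1$, and Weil descent to $\mathbb Q$ via the branch cycle lemma and the power-map invariance of the class multiset. Your deferral of the cocycle/continuity checks and of representability over arbitrary $\mathbb Q$-bases to \cite{Debes2026} matches the level of detail in the paper.
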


Here $\Conf_r(\mathbb A^1)$ denotes the space of unordered sets of $r$ distinct points. It is the principal open subset of $\mathbb A^r$ parametrizing monic squarefree polynomials of degree $r$, and is therefore smooth and affine.

After fixing the branch points and standard loops, the fibers are described by generating product-one monodromy tuples with the prescribed inertia classes.
For unmarked covers, these tuples are taken modulo simultaneous conjugation by $\Gamma$; for marked covers, they are actual tuples.
In both cases, the connected components correspond to orbits of the Hurwitz braid action.

\begin{proof}[Proof of Fact~\ref{fact:Hurwitz}]
Fix a nontrivial finite group $G$. By Fact~\ref{fact:auxiliary-group}, choose a finite centerless group $\Gamma$ and a surjection $\pi:\Gamma\twoheadrightarrow G$ with the stated Schur-cover property.
Write $c_1,\ldots,c_s$ for its nonidentity conjugacy classes, and take $e=\exp(\Gamma)$.

By Fact~\ref{fact:braid-transitivity}, for all sufficiently large $N$, the generating product-one tuples with $N$ entries in each $c_i$, if nonempty, form a single braid orbit modulo simultaneous conjugation.

We check nonemptiness for every sufficiently large multiple $N$ of $e$.
Start with a block of $e$ copies of each nonidentity element of $\Gamma$. Each block has product one, and together the blocks generate $\Gamma$.
There are $e|c_i|$ entries in class $c_i$.
Whenever $N\geq e\max_i|c_i|
\text{ and } e\mid N,$
we can add blocks of $e$ copies of a representative of each $c_i$ until every class occurs exactly $N$ times.
Thus the required tuples form a nonempty single orbit modulo simultaneous conjugation for every sufficiently large multiple $N$ of $e$.

The power maps
\[
c\longmapsto c^a,\qquad \gcd(a,|\Gamma|)=1,
\]
permute the nonidentity conjugacy classes.
Consequently, the multiset containing $N$ copies of every $c_i$ satisfies the rationality hypothesis of
Fact~\ref{fact:arithmetic-hurwitz}.
That fact gives a space $H_N\longrightarrow\Conf_{sN}(\mathbb A^1)$
over $\mathbb Q$, together with a universal family $f:\mathcal Y_N
 \longrightarrow\mathbb P^1_{\mathbb Q}\times H_N$.

The space $H_N$ is smooth and affine because it is finite \'etale over a smooth affine variety.
By the component description in
Fact~\ref{fact:arithmetic-hurwitz}, $H_{N,\mathbb C}$ is nonempty and connected.

Define
\begin{equation}\label{eq:P-universal}
P_N=f^{-1}(\{\infty\}\times H_N).
\end{equation}
A point of $P_N$ specifies a cover parametrized by $H_N$ together with a point on that cover above infinity.
Since every cover is unramified there, the projection $P_N\longrightarrow H_N$
 is finite \'etale of degree $|\Gamma|$.
Thus $P_N$ is smooth and affine over $\mathbb Q$.
Pulling the universal family back to $P_N$, with its tautological point above infinity, gives the universal family of marked covers.

We now show that adding the marking preserves connectedness of the parameter space. The fibers of $P_{N,\mathbb C}$ are described by actual monodromy tuples, whereas those of $H_{N,\mathbb C}$ are described by tuples modulo simultaneous conjugation.
We show that every simultaneous conjugate of a generating product-one tuple already lies in its braid orbit; see \cite[Remark~1.9]{Debes2026} for a comparison between marked and unmarked Hurwitz spaces.

Let $g=(g_1,\ldots,g_r)$ be such a tuple.
Successively moving its first entry to the end by Hurwitz moves gives
\[
L(g)=(g_2,\ldots,g_r,g_1),
\]
where the product-one relation is used.
Successively moving its last entry to the front gives
\[
M(g)=
(g_r,g_r^{-1}g_1g_r,\ldots,g_r^{-1}g_{r-1}g_r).
\]
Hence
\[
L(M(g))=
(g_r^{-1}g_1g_r,\ldots,g_r^{-1}g_rg_r).
\]
This shows that simultaneous conjugation by $g_r^{-1}$ is achieved within the braid orbit.
Since Hurwitz moves commute with simultaneous conjugation and the entries generate $\Gamma$, every simultaneous conjugate of $g$ belongs to its braid orbit.

Thus, the single orbit modulo conjugation is already a single orbit on actual tuples.
It follows that $P_{N,\mathbb C}$ is connected.
Since it is smooth, it is integral, proving that $P_N$ is geometrically integral over $\mathbb Q$.

The universal marked family has geometrically integral covering curves, is unramified above infinity, and has exactly $N$ branch points of each inertia class.
This proves Fact~\ref{fact:Hurwitz}.
\end{proof}

\subsection{Proof of Fact~\ref{fact:LL}}
\label{sec:stability-verification}
We need the following terminology before proving the result.

Let $\Gamma$ be a finite centerless group, put $c=\Gamma\setminus\{1\}$, and write $c_1,\ldots,c_s$ for its nonidentity conjugacy classes.
Fix integers $n_1,\ldots,n_s\geq 0$ with $r=n_1+\cdots+n_s\geq 1$.

Consider the finite set
\[
T_{\mathbf n}=
\left\{
(g_1,\ldots,g_r)\in c^r:
\begin{array}{l}
\langle g_1,\ldots,g_r\rangle=\Gamma,\\
\#\{j:g_j\in c_i\}=n_i
\text{ for every }i
\end{array}
\right\}.
\]
The braid group $\operatorname{Br}_r$ acts on this set by the Hurwitz moves
\[
(\ldots,g_j,g_{j+1},\ldots)
\longmapsto
(\ldots,g_{j+1},g_{j+1}^{-1}g_jg_{j+1},\ldots).
\]
These moves preserve both the generated subgroup and the class multiplicities.

Let $\Conf_r(\mathbb A^1_{\mathbb C})$ be the configuration space of unordered sets of $r$ distinct points.
After choosing a base configuration and the standard identification of its fundamental group with $\operatorname{Br}_r$, the above action determines a finite topological covering of this configuration space.
By the Riemann existence theorem, this covering is the analytification of a finite \'etale algebraic cover.
We denote that algebraic cover by
\[
\CHur^c_{n_1,\ldots,n_s}
\longrightarrow
\Conf_r(\mathbb A^1_{\mathbb C}).
\]
This is the \emph{pointed Hurwitz space on the generating locus}; see \cite[\S2.2]{LL2025} and~\cite[\S1.2]{Debes2026}for more details.

Let $B_{\mathbf n}$ denote the colored configuration space: its points are tuples $(D_1,\ldots,D_s)$ of pairwise disjoint finite subsets of $\mathbb A^1(\mathbb C)$ with $|D_i|=n_i$.
Recording the inertia class of each branch point gives an algebraic colored branch map $b_{\mathbf n}:
\CHur^c_{n_1,\ldots,n_s}\longrightarrow B_{\mathbf n}$. Thus $D_i$ consists of the branch points whose local monodromy belongs to $c_i$.

No product-one condition is imposed in the definition of the pointed Hurwitz space on the generating locus.
Thus the condition $g_1\cdots g_r=1$
defines a union of connected components. Geometrically, this is the locus of covers unramified above infinity, on which the pointing is an ordinary chosen point above infinity.
When $n_1=\cdots=n_s=N$, this locus is identified with $P_{N,\mathbb C}$; see \cite[\S\S2.2-2.3]{LL2025}.
Its connectedness follows from Fact~\ref{fact:Hurwitz}.
We state the specialization of Landesman–Levy's theorem used below.

\begin{fact}
\label{fact:homological-stability}
Let $\Gamma$ be a finite centerless group, write $c_1,\ldots,c_s$ for its nonidentity conjugacy classes, and put $c=\Gamma\setminus\{1\}$.

Let $\CHur^c_{n_1,\ldots,n_s}$ denote the pointed Hurwitz space on the generating locus, and let $B_{n_1,\ldots,n_s}$ denote the configuration space with $n_j$ points of color $c_j$.
There are constants $I,J$, depending only on $\Gamma$, such that, for every $i\geq 0$ and every choice of multiplicities satisfying
\[
n_j>Ii+J\quad\text{for all }j,
\]
the colored branch map on every connected component $Z\subseteq\CHur^c_{n_1,\ldots,n_s}$ induces an isomorphism $H_i\!\left(Z,\mathbb Z[1/|\Gamma|]\right)
\xrightarrow{\ \sim\ }
H_i\!\left(B_{n_1,\ldots,n_s},
           \mathbb Z[1/|\Gamma|]\right).$

\end{fact}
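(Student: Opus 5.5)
The plan is to obtain the statement as a direct specialization of Landesman--Levy's theorem \cite[Theorem~1.4.2]{LL2025}, rather than to reprove homological stability. That theorem works with a finite group $\Gamma$ and a conjugation-invariant subset $c\subseteq\Gamma$, decomposed into conjugacy classes $c_1,\ldots,c_s$. It concerns the pointed Hurwitz spaces $\CHur^c_{n_1,\ldots,n_s}$ on the generating locus. In a range of multiplicities that is linear in the homological degree, it asserts that the colored branch map induces an isomorphism on $H_i(-,\mathbb Z[1/|\Gamma|])$ from each connected component onto the colored configuration space. So the work consists of three steps: matching the objects, checking the hypotheses, and extracting the constants $I,J$.

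\emph{Matching the objects.} I would first verify that the spaces of \cite[\S2.2]{LL2025} agree with the ones defined above. They are built from the $\operatorname{Br}_r$-set $T_{\mathbf n}$ of generating tuples in $c^r$ with prescribed class multiplicities, via the Riemann existence theorem over $\Conf_r(\mathbb A^1_{\mathbb C})$. Their colored branch map records, for each branch point, the conjugacy class of its local monodromy; this is the map $b_{\mathbf n}$ into $B_{\mathbf n}$. I would also check that the coefficient ring in \cite{LL2025} is $\mathbb Z[1/|\Gamma|]$ (or a localization containing it), and that the statement is made for every component, not only for the union.

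\emph{Checking the hypotheses.} With $c=\Gamma\setminus\{1\}$, the set $c$ is closed under conjugation and generates $\Gamma$. Every class $c_j$ appears, and the condition $n_j>Ii+J$ for all $j$ is exactly the requirement that all multiplicities lie in the stable range. If \cite{LL2025} imposes a condition such as a ``nonsplitting'' or centerless hypothesis on $(\Gamma,c)$, it holds here: we assume $Z(\Gamma)=1$, and $c$ contains all nonidentity elements, so every subgroup meets it.

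\emph{Extracting uniform constants.} This is the main obstacle: confirming that the stable range in \cite[Theorem~1.4.2]{LL2025} really has the form $n_j>Ii+J$ with $I,J$ depending only on $\Gamma$ (and $c$, which is determined by $\Gamma$). If the theorem is phrased instead via a stabilization range in terms of $\min_j n_j$, or via an explicit bound involving $|\Gamma|$, I would simply take $I,J$ to be those explicit constants. If it is phrased via stabilization maps between components, I would combine stability with the identification of the stable homology with $H_i(B_{\mathbf n})$. That identification is the content of their Theorem~1.4.2 as opposed to mere stability, and it is where the colored configuration space enters.
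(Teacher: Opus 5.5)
Your plan is the paper's own: the paper simply quotes \cite[Theorem~1.4.2]{LL2025} for the conjugation rack $c=\Gamma\setminus\{1\}$, taking the range $n_j>Ii+J$ and the component-by-component statement directly from it. The one check the paper actually records is the coefficient ring. In \cite{LL2025} this ring is $\mathbb Z[1/|G_c^0|]$, where the reduced structure group is $G_c^0=\Gamma/Z(\Gamma)=\Gamma$, and this identification is where the paper uses centerlessness. Your fallback of a localization \emph{containing} $\mathbb Z[1/|\Gamma|]$ points the wrong way: only an isomorphism over a subring of $\mathbb Z[1/|\Gamma|]$ transfers to $\mathbb Z[1/|\Gamma|]$, by flatness of localization.
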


This is \cite[Theorem~1.4.2]{LL2025}, specialized to the conjugation operation on $c$. In their statement, the coefficient ring is $\mathbb Z[1/|G_c^0|]$, where $G_c^0$ is the reduced structure group. For our purposes, $c=\Gamma\setminus\{1\}$ and $G_c^0=\Gamma/Z(\Gamma)=\Gamma$.

\begin{proof}[Proof of Fact~\ref{fact:LL}]
Fix $N$ as in Fact~\ref{fact:Hurwitz}. After fixing the branch points and choosing loops, the fibers of $\CHur^c_{N,\ldots,N}$ are described by generating tuples with $N$ entries in each $c_j$; see \cite[Definitions~2.2.2 and~2.3.1]{LL2025}. As explained before, the product-one tuples form a union of connected components, since Hurwitz moves preserve their product.

Moreover, the product-one locus is precisely $P_{N,\mathbb C}$, as explained above.

This identification is algebraic and preserves the colored branch map. Indeed, the two constructions give finite \'etale covers of the configuration space with the same braid action on their fibers. Moreover, in both constructions, the color of a branch point is its inertia class.

By Fact~\ref{fact:Hurwitz}, $P_{N,\mathbb C}$ is connected. It is therefore a single component of $\CHur^c_{N,\ldots,N}$, and its colored branch map is $b_N:P_{N,\mathbb C}\longrightarrow B_N$.

Choose the lower bound on $N$ large enough that $N>I+J$. Apply Fact~\ref{fact:homological-stability} with
$i=1, n_1=\cdots=n_s=N, Z=P_{N,\mathbb C}$.

It gives an isomorphism
$(b_N)_*: H_1\!\left(P_{N,\mathbb C},\mathbb Z[1/|\Gamma|]\right)\xrightarrow{\ \sim\ }H_1\!\left(B_N,\mathbb Z[1/|\Gamma|]\right)$.

Extending coefficients to $\mathbb Q$ and applying the universal coefficient theorem over $\mathbb Q$, we obtain the dual isomorphism $b_N^*:H^1(B_N,\mathbb Q) \xrightarrow{\ \sim\ } H^1(P_{N,\mathbb C},\mathbb Q)$. This proves Fact~\ref{fact:LL}.
\end{proof}
Before ending the section, note that the fact that the parameter space is defined over $\mathbb{Q}$ gives the following about existentially closed fields simultaneously avoiding multiple curves.
\begin{remark}
Let $k_0$ be a field of characteristic zero, and let $\mathcal{C}$ be any collection of smooth geometrically integral curves of positive genus over $k_0$ such that $C(k_0)=\varnothing$ for every $C\in\mathcal{C}$.
Consider the class of extensions $K/k_0$ with $C(K)=\varnothing \text{  for every }C\in\mathcal{C}$.

Existentially closed models of this class exist, and every such field solves
the regular inverse Galois problem by invoking Proposition~\ref{prop:parameter}. Indeed, fix a finite group $G$, and choose the parameter variety $V$ and its smooth projective compactification $X$ as in Proposition~\ref{prop:parameter}. Put $F=K(V_K)$. $H^0(X_K,\Omega^1_{X_K/K})=0$ gives that every rational map from $X_K$ to the smooth projective completion of a curve $C_K$, with $C\in\mathcal C$ is constant. Thus $F$ remains in the class, and existential closedness descends the regular $G$-realization from $F$ to $K$.

In particular, over $\mathbb Q$ we may take
\[
\mathcal C=
\left\{
D\setminus D(\mathbb Q):
\begin{array}{l}
D/\mathbb Q\text{ is smooth projective geometrically integral},\\
g(D)>0,\quad |D(\mathbb Q)|<\infty
\end{array}
\right\}.
\] 
The resulting existentially closed fields simultaneously preserve the rational points of all these projective curves. Every such field $K$ is a proper regular extension of $\mathbb{Q}$. Nevertheless, $K$ remains very different from $\mathbb{Q}$, since existential closedness forces the equation $x^2+y^2=-1$ to have a solution in $K$. On the other hand, fields that are existentially closed among those avoiding a fixed genus $0$ curve over $\mathbb{Q}$ with no rational points are necessarily large, so the RIGP over them is already solvable by Pop's theorem.
\end{remark}

\section{Comparison with Koenigsmann's construction}

\begin{remark}\label{prop:koenigsmann-not-cxf} Koenigsmann \cite{Koe} constructed fields $K$ that contain no large subfield, and $\mathrm{RIGP}$ is solvable over $K$. When specializing the construction over the base field $\mathbb{Q}$, it will not give a model of $C\mathrm{XF}$. More precisely, let $K/\mathbb{Q}$ be a field obtained by the construction
in the proof of ~\cite[Theorem~2.1]{Koe}, using the function fields with rational places chosen there. Then $K$ is formally real. In particular, $K$ is not a model of any curve excluding field $C\mathrm{XF}$.

In Koenigsmann's proof, the field $K$ is the compositum of a family of independent regular function fields $F_i/\mathbb{Q}$ of transcendence degree one. Each $F_i$ admits a $\mathbb{Q}$-rational place.

By \cite[Corollary~2.2.6]{EnglerPrestel2005}, For any extension $F/k$ admiting a $k$-rational place, every ordering of $k$ extends to an ordering of $F$. Hence, every $F_i$ is orderable. Iterating, we get that any finite compositum of the $F_i$'s above is orderable, and therefore formally real. So an equation of the form $-1=a_1^2+\cdots+a_m^2, a_i\in K$ would already hold in a formally real finite compositum. Therefore $K$ is formally real.

On the other hand, $C\mathrm{XF}$'s are never formally real. Let $L\models C\mathrm{XF}$, where $C$ is a smooth geometrically integral curve whose smooth projective completion $\overline{C}$ has genus at least two and $C(L)=\varnothing$.

Consider the projective curve $D: X^2+Y^2+Z^2=0 \subseteq \mathbb{P}^2_L.$ We claim that $C(L(D))=\varnothing$. Indeed, an $L(D)$-point of $C$ defines a rational $L$-map $f:D\dashrightarrow C$. Constant maps correspond to elements in $C(L)$, which is impossible. 
If $f$ were nonconstant, it would give a nonconstant morphism from a genus-zero curve to a curve of genus at least two. This contradicts the Riemann-Hurwitz formula.

Hence $L(D)/L$ is an extension that still avoids $C$.  On the affine chart $Z\neq 0$, $x=X/Z$ and $y=Y/Z$ satisfy $x^2+y^2=-1.$ So $L(D)\models \exists x\,\exists y\;(x^2+y^2=-1).$ By $L\prec_\exists L(D)$, the same sentence holds in $L$. Thus no $C$XF is formally real.
\end{remark}

\begin{remark}
If a field $K$ in Koenigsmann's construction avoids a chosen genus at least $2$ curve $C/K$, then~\cite[Theorem~4.16]{cxf-paper} gives a regular extension $M/K$ with $M\models C\mathrm{XF}$; every regular Galois realization over $K$ remains regular with the same Galois group after extending constants to $M$, giving a  quick proof that $\mathrm{RIGP}$  holds over such $M$.

\end{remark}

\section{Extensions to variety excluding fields}\label{sec:VXF}

To distinguish the excluded variety from the parameter variety $V$ of Proposition~\ref{prop:parameter}, denote the former by $W$. Fix a characteristic $0$ field $k_0$ and a geometrically integral quasi-projective $W/k_0$ with $W(k_0)=\varnothing$. Let $T_W$ be the theory of extensions $K/k_0$ satisfying $W(K)=\varnothing$. When the model companion exists, $W$ is called \emph{excludable}, and the model companion is denoted $\WXF$. See \cite[Definitions~2.2, 2.5 and 2.7]{vxf-paper} as well.

Szachniewicz and the author prove the existence of $\WXF$ under geometric purity and boundedness hypotheses; in particular, projective Brody hyperbolic varieties over $\Q$ are excludable \cite{vxf-paper}. The following is a consequence of Proposition~\ref{prop:witness}.

\begin{proposition}\label{prop:W-descent}
Let $K$ be existentially closed among extensions of $k_0$ avoiding $W$ and let $F/K$ avoid $W$. Every finite group regularly realizable over $F$ is regularly realizable over $K$.
\end{proposition}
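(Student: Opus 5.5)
The plan is to mirror the proof of Lemma~\ref{lem:descent}, replacing the curve $C$ by the excluded variety $W$. Only two ingredients are needed: existential closedness, and the existential nature of regular realizability from Proposition~\ref{prop:witness}. No geometric input about $W$ enters at this stage. The geometry is needed only later, to build an extension $F$ that still avoids $W$.

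First I would check that $F$ is an admissible extension. Since $F/K$ and $K/k_0$, the field $F$ is an extension of $k_0$ in the language of rings with constants for $k_0$. It satisfies $W(F)=\varnothing$, so it is a model of $T_W$ containing $K$. By the definition of $K$ being existentially closed among extensions of $k_0$ avoiding $W$, we get $K\ec F$. Concretely, every existential sentence with parameters from $K$ (in particular, every sentence with constants from $k_0$ only) that holds in $F$ already holds in $K$.

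Next, fix a finite group $G$ that is regularly realizable over $F$. By Proposition~\ref{prop:witness}, there is some $n$ with $F\models\REG(G,n)$. This is an existential sentence in the language of rings, with no parameters at all, so $K\ec F$ gives $K\models\REG(G,n)$. Applying Proposition~\ref{prop:witness} once more in the reverse direction, $G$ is regularly realizable over $K$.

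I expect no real obstacle here. The only point to verify is that Koenigsmann's sentences $\REG(G,n)$ are genuinely existential and parameter-free, so that existential closedness in the expanded language applies. This is exactly what Proposition~\ref{prop:witness} provides.
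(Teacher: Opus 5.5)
Your proposal is correct and matches the paper's argument: the paper presents this as an immediate consequence of Proposition~\ref{prop:witness}. It uses existential closedness to get $K\ec F$ and then transfers the existential sentence $\REG(G,n)$ from $F$ down to $K$, exactly as in Lemma~\ref{lem:descent}.
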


The remaining issue is the following: when using the parameter variety construction and taking $F=K(V_K)$, Proposition~\ref{prop:parameter} is not strong enough to ensure $W(F)=\varnothing$. However, we believe that a generalization of Proposition~\ref{prop:parameter}(ii) could be proved and would be sufficient for this purpose. Instead of pursuing this further, we leave it as a question.

\begin{question}\label{mcfields}
Are the following statements true?
  \begin{enumerate}
 \item  For an excludable variety $W/k_0$, $\mathrm{RIGP}$ is always solvable over models of $W\mathrm{XF}_{k_0}$.
      \item     More generally,  $\mathrm{RIGP}$ is always solvable over a model-complete field.
  \end{enumerate}

\end{question}

We prove RIGP is solvable for the $W$ excluding fields arising from suitable complete intersections in abelian varieties. This partially answers the above question.

\subsection{Subvarieties of abelian varieties}

The geometric fact needed for preserving avoidance of $W$ in this case is the following.

\begin{lemma}\label{lem:abelian-vanishing}
Let $k$ be a field of characteristic $0$, and let $X/k$ be a smooth projective geometrically integral variety such that $H^0(X,\Omega^1_{X/k})=0.$ If $A/k$ is an abelian variety, then every $k$-rational map $f:X\dashrightarrow A$ is constant, with value in $A(k)$.
\end{lemma}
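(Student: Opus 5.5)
The plan is to first extend the rational map to a morphism, then use the Albanese/translation structure of $A$ to force it to be constant via the vanishing of $1$-forms.

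\emph{Step 1 (extension to a morphism).} An abelian variety contains no rational curves over $\overline k$, so the rational map $f$ extends to a morphism on the smooth variety $X$. This is Weil's extension theorem. Equivalently, as in the proof of Lemma~\ref{lem:no-maps}, $A$ is geometrically pure, and \cite[Proposition~5.7]{vxf-paper} applies. So we may assume $f:X\to A$ is a morphism.

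\emph{Step 2 (vanishing of pullbacks).} Choose any basis $\omega_1,\dots,\omega_g$ of the translation-invariant $1$-forms on $A$. These span $H^0(A,\Omega^1_{A/k})$, which has dimension $g=\dim A$, and they trivialize $\Omega^1_A$. Their pullbacks $f^*\omega_i$ lie in $H^0(X,\Omega^1_{X/k})=0$, so they vanish. Hence at every point $x$ the cotangent map $df_x^\vee: T^*_{f(x)}A\to T^*_xX$ is zero, because the $\omega_i$ span every cotangent space of $A$. Thus $df=0$ identically.

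\emph{Step 3 (constancy).} In characteristic $0$, a morphism with vanishing differential from a geometrically integral variety is constant. To see this, pass to $\overline k$ and consider the image $Z=\overline{f(X_{\overline k})}$, which is irreducible. If $\dim Z>0$, generic smoothness applied to $X_{\overline k}\to Z$ gives a dense open subset where $df$ surjects onto $T_Z$. This contradicts $df=0$, so $f(X_{\overline k})$ is a single point $a\in A(\overline k)$.

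\emph{Step 4 (rationality of the value).} The point $a$ is fixed by $\operatorname{Gal}(\overline k/k)$, because $f$ is defined over $k$. Alternatively, $X(k(X))\ni$ generic point maps to a point of $A$ whose field of definition is contained in $k(X)\cap\overline k=k$, using geometric integrality. So $a\in A(k)$.

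The only genuinely non-formal input is Step 1. A rational map from a smooth variety to an abelian variety extends everywhere, and I would cite Weil's theorem (or the purity reference already used in the paper). Steps 2 and 3 are routine differential arguments.
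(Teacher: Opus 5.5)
Your proof is correct and follows essentially the same route as the paper. You extend $f$ to a morphism, show the differential vanishes because the invariant $1$-forms generate $\Omega^1_A$ and pull back into $H^0(X,\Omega^1_{X/k})=0$, deduce constancy in characteristic $0$, and get $a\in A(k)$ because $k$ is algebraically closed in $k(X)$. The only difference is that the paper extends $f$ on a birational model $X'\to X$ and then uses birational invariance of $H^0(\Omega^1)$, whereas you extend directly on $X$ (via Weil's theorem, or purity of $A$), which is slightly more economical.
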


\begin{proof}

By the same argument as before, there is a smooth projective geometrically integral variety $X'$ and a birational morphism $\pi:X'\to X$ such that $f\circ\pi$ extends to a morphism $f':X'\longrightarrow A$. By birational invariance, $H^0(X',\Omega^1_{X'/k})=0$.

Every global $1$-form on $A$ therefore pulls back to zero on $X'$. Since these forms generate $\Omega^1_{A/k}$, the differential $(f')^*\Omega^1_{A/k}\longrightarrow\Omega^1_{X'/k}$ is trivial. In characteristic $0$, a morphism with positive-dimensional image must have a nonzero derivative at the generic point. Hence the image of $f'$ is zero-dimensional.

Because $X'$ is integral and proper, its image is a single closed point $a\in A$. The corresponding residue field embeds into the function field $k\subseteq k(a)\subseteq k(X')$.

The extension $k(a)/k$ is finite. Geometric integrality of $X'$ implies that $k$ is relatively algebraically closed in $k(X')$, so $k(a)=k$. Thus $a\in A(k)$, as required.
\end{proof}

\begin{theorem}\label{thm:rigp-complete-intersections}
Let $k_0$ be a field of characteristic $0$, let $A/k_0$ be an abelian variety, and let $W\subseteq A$ be a geometrically integral locally closed $k_0$-subvariety with $W(k_0)=\varnothing$. Then RIGP is solvable over every field $K/k_0$ that is existentially closed in the class of $k_0$-fields avoiding $W$. In particular, if $W$ is a variety and above and excludable, then RIGP is solvable over every model of $W\mathrm{XF}$.
\end{theorem}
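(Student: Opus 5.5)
The plan is to copy the proof of Theorem~\ref{thm:main}, with Lemma~\ref{lem:abelian-vanishing} in place of Lemma~\ref{lem:no-maps}. Fix a finite group $G$ and let $K/k_0$ be existentially closed among $k_0$-fields avoiding $W$. Choose $V/\Q$ and a smooth projective compactification $X/\Q$ as in Proposition~\ref{prop:parameter}, and put $F=K(V_K)$. Since $V$ is geometrically integral, $F$ is a regular extension of $K$ containing $\Q(V)$. By Lemma~\ref{lem:constants}, the regular $G$-realization over $\Q(V)$ extends to a regular $G$-realization over $F$. So, once $W(F)=\varnothing$ is known, Proposition~\ref{prop:W-descent} descends the realization to $K$. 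The final clause about models of $W\mathrm{XF}$ is then immediate, since those models are exactly the existentially closed models of $T_W$.

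The key step, and the only genuinely new one, is $W(F)=\varnothing$. Suppose $p\in W(F)$. Since $W\subseteq A$ is locally closed and defined over $k_0\subseteq K$, the point $p$ gives a point of $A(F)$, which is a $K$-rational map $f:V_K\dashrightarrow A_K$, hence $f:X_K\dashrightarrow A_K$. As in Corollary~\ref{cor:avoidance}, global $1$-forms commute with base change, so $H^0(X_K,\Omega^1_{X_K/K})=0$. Also $X_K$ is smooth, projective and geometrically integral over $K$. Lemma~\ref{lem:abelian-vanishing} then shows that $f$ is constant with value some $a\in A(K)$.

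It remains to check that $a\in W(K)$. The generic point of $V_K$ maps to $a$, and this point lies in $W$ because $p\in W(F)$. Hence $a\in W(K)$, contradicting $W(K)=\varnothing$. The one thing to verify here is that ``constant with value $a$'' really means the $F$-point $p$ is the image of $a$ under $A(K)\to A(F)$; once that holds, $p\in W(F)$ forces $a\in W(K)$ because $W$ is locally closed in $A$. I expect this bookkeeping, together with the fact that $K$ is relatively algebraically closed in $F$ (already part of Lemma~\ref{lem:abelian-vanishing}), to be the only delicate point. No extension argument for rational maps to $W$ itself is needed, because we only map into the ambient abelian variety $A$.
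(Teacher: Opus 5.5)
Your proposal is correct and follows the paper's proof essentially step for step. The paper also sets $F=K(T_K)$ for the parameter variety of Proposition~\ref{prop:parameter}, composes an $F$-point of $W$ with $W_K\hookrightarrow A_K$ to get a rational map $X_K\dashrightarrow A_K$, and applies Lemma~\ref{lem:abelian-vanishing} to obtain a constant value $a\in A(K)$ whose generic value forces $a\in W(K)$; it then descends the realization via $K\prec_{\exists}F$ and Proposition~\ref{prop:witness}, which is exactly what your appeal to Proposition~\ref{prop:W-descent} packages.
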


\begin{proof}
Fix such a field $K$ and a finite group $G$. By Proposition~\ref{prop:parameter}, there are a smooth geometrically integral quasi-projective variety $T/\mathbb{Q}$ and a smooth projective compactification $X/\mathbb{Q}$ of $T$ such that $G$ is regularly realizable over $\mathbb{Q}(T)$ and $H^0(X,\Omega^1_{X/\mathbb{Q}})=0$.

Set $F=K(T_K)$. Since $T$ is geometrically integral, $F/K$ is regular and $F$ contains $\mathbb{Q}(T)$. By Lemma~\ref{lem:constants}, $G$ is regularly realizable over $F$. Moreover, base change gives
\[
    H^0(X_K,\Omega^1_{X_K/K})\cong H^0(X,\Omega^1_{X/\mathbb{Q}}) \otimes_{\mathbb{Q}}K=0.
\]

We claim that $W(F)=\varnothing$. Otherwise, an $F$-point of $W$ determines a rational $K$-map $T_K\dashrightarrow W_K$. Composing with the inclusion $W_K\hookrightarrow A_K$ and using the density of $T_K$ in $X_K$ gives a rational map $X_K\dashrightarrow A_K$.

By Lemma~\ref{lem:abelian-vanishing}, this map is constant with value at some $a\in A(K)$. Its generic value lies in $W_K$, so $a\in W(K)$, contrary to $W(K)=\varnothing$. This proves the claim.

Since $F/K$ still avoids $W$, existential closedness gives $K\prec_{\exists}F$. Proposition~\ref{prop:witness} therefore descends the regular $G$-realization to $K$. As $G$ was arbitrary, RIGP is solvable over $K$. The final assertion follows because the models of $W\mathrm{XF}$ are precisely the existentially closed $k_0$-fields avoiding $W$.
\end{proof}

\begin{remark}
Let $\dim A=g\geq 2$, fix a very ample line bundle $\mathcal L$ on $A$, and choose an integer $\left\lceil\frac g2\right\rceil\leq c<g$. For sufficiently large $d$, a general complete intersection
\[
Y=H_1\cap\cdots\cap H_c, \quad  H_i\in|\mathcal L^{\otimes d}|,
\]
is smooth and geometrically integral and has ample cotangent bundle. For ampleness, apply \cite[Theorem~0.1]{amplecotangent} with the fixed multidegree vector $(1,\ldots,1)$; smoothness and geometric integrality follow from Bertini's theorem.
The theorem applies whenever a proper closed $D\subsetneq Y$ is chosen with $W=Y\setminus D,
W(k_0)=\varnothing$. Moreover, such $W$'s are excludable by \cite[Theorem 6.5 and Corollary 6.7]{vxf-paper}, i.e., $W\mathrm{XF}$ exists.

\end{remark}
\noindent
\textbf{Use of AI.}
 The idea behind Proposition~\ref{prop:parameter} arose from discussions with ChatGPT while the author was learning about Hurwitz spaces from~\cite{Debes2026}. ChatGPT also helped check mathematical arguments and locate references for this part of the paper.  ChatGPT and Google Gemini were used to improve the clarity and grammar of the writing.

\renewcommand{\bibfont}{\small}
\bibliographystyle{alpha}
\bibliography{ref}

\end{document}